\documentclass[12pt]{amsart}

\usepackage{amssymb}

\usepackage{graphicx}

\numberwithin{equation}{section}

\newtheorem{theorem}{Theorem}[section]
\newtheorem{corollary}[theorem]{Corollary}
\newtheorem{lemma}[theorem]{Lemma}
\newtheorem{proposition}[theorem]{Proposition}

\theoremstyle{definition}
\newtheorem{definition}[theorem]{Definition}
\newtheorem{remark}[theorem]{Remark}

\def\R{\mathbb R}

\newcommand{\ep}{\varepsilon}



\title[Regularity of interfaces via obstacle problems]{Regularity of interfaces in phase transitions via obstacle problems}

\author[Alessio Figalli]
{Alessio Figalli}
\address{ETH Z\"urich, Mathematics Department, R\"amistrasse 101, 8092 Z\"urich, Switzerland.}
\email{alessio.figalli@math.ethz.ch}

\begin{document}

\begin{abstract}
The aim of this note is to review some recent developments on the regularity theory for the stationary and parabolic obstacle problems. 

After a general overview, we present some recent results on the structure of singular free boundary points. Then, we show some selected applications to the generic smoothness of the free boundary in the stationary obstacle problem (Schaeffer's conjecture), and to the  smoothness of the free boundary in the one-phase Stefan problem for almost every time. 
\end{abstract}

\maketitle

\tableofcontents

\section{Introduction}

\subsection{The classical Stefan problem}
The classical Stefan problem aims to describe the temperature distribution in a homogeneous medium undergoing a phase change, typically the melting of a body of ice maintained at zero degrees centigrade. Given are the initial temperature distribution of the water and the energy contributed to the system through the boundary of the domain.
The unknowns are the temperature distribution of the water as a function of space and time, and the  ice-water interface.

This problem is named after Josef Stefan, a Slovenian physicist who introduced the general class of such problems around 1890 in relation to problems of ice formation, although this question had already been considered by Lam\'e and Clapeyron in 1831.

\smallskip

In its most classical formulation, the Stefan 
problem can be formulated as follows:
let $\Omega\subset \R^n$ be a bounded domain, and let $$\theta=\theta(t,x)$$ denote the {temperature} of the medium at a point $x \in \Omega$ at time $t \in \R^+$.
We assume that $\theta\geq 0$ in $\R^+\times \Omega$, so that 
 $\{\theta=0\}$ represents the ice while $\{\theta>0\}$ represents the water,
see Figure \ref{Pic01}.
 \begin{figure}[ht]
 \includegraphics[scale=0.25]{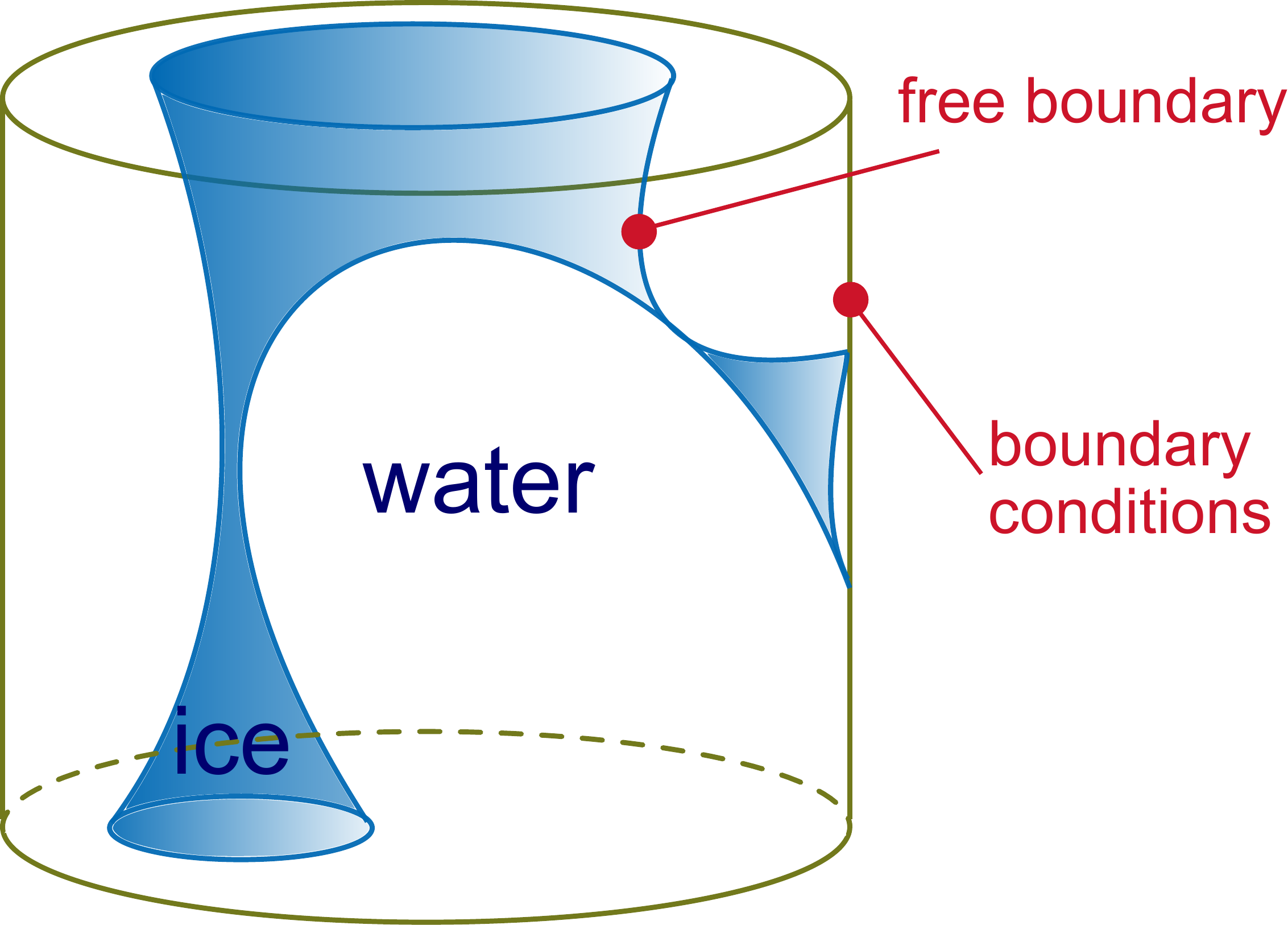}
 \caption{The Stefan problem. Because the ice-water interface is an unknown of the problem, it is called ``free boundary''.} 
  \label{Pic01}
 \end{figure}
 
We prescribe an initial condition
$$\theta(0,x)=\theta_0(x)\geq 0$$
at time $t=0$, and a boundary condition
$$
\theta(t,x)=\theta_b(t,x)\geq 0 \qquad \text{for $x \in \partial\Omega$ and $t\geq 0$}.
$$
In the water, the temperature evolves in time according to the classical heat equation, that is
\begin{equation}
\label{eq:heat}
\partial_t\theta=\Delta \theta\qquad \text{inside }\{\theta>0\}.
\end{equation}
Also, the interface ice-water moves accordingly to the so-called ``Stefan condition''
\begin{equation}
\label{eq:stefan}
\dot x(t)=-\nabla \theta(t,x(t))\qquad \forall\, x(t)\in \partial \{\theta(t)>0\},
\end{equation}
where $\nabla\theta(t)$ denotes the spatial gradient of $\theta(t)$ computed from inside the region $\{\theta(t)>0\}$, see Figure \ref{Pic02}.
\bigskip

\begin{figure}[ht]
 \includegraphics[scale=0.4]{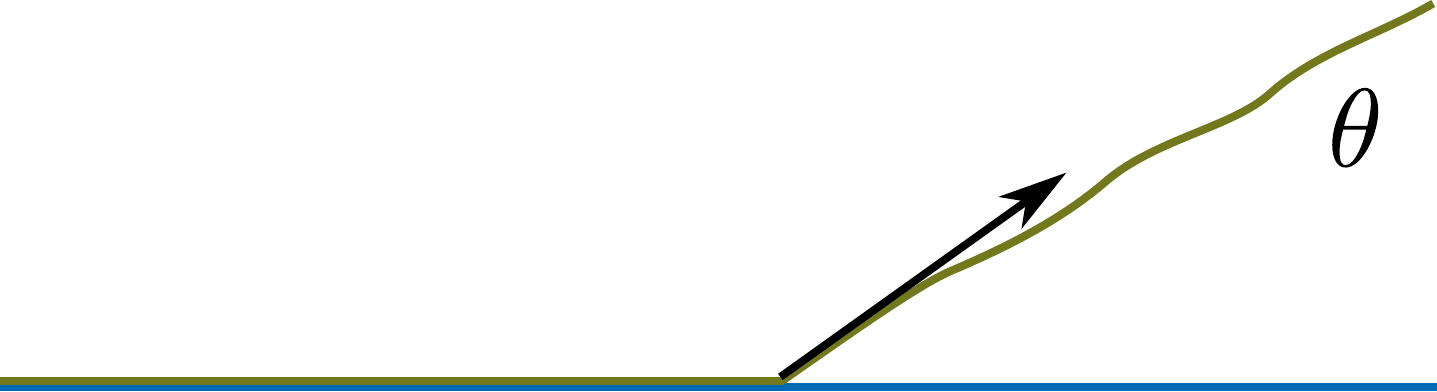}
 \caption{The gradient of $\theta(t)$ at a free boundary point is computed from inside the region $\{\theta(t)>0\}$.} 
 \label{Pic02}
 \end{figure}
Observe that $-\nabla \theta(t,x(t))$ points always towards the region $\{\theta(t)=0\}$, hence this set shrinks in time, see Figure \ref{Pic03}.  In other words, ice is melting.
 \begin{figure}[ht]
 \includegraphics[scale=0.14]{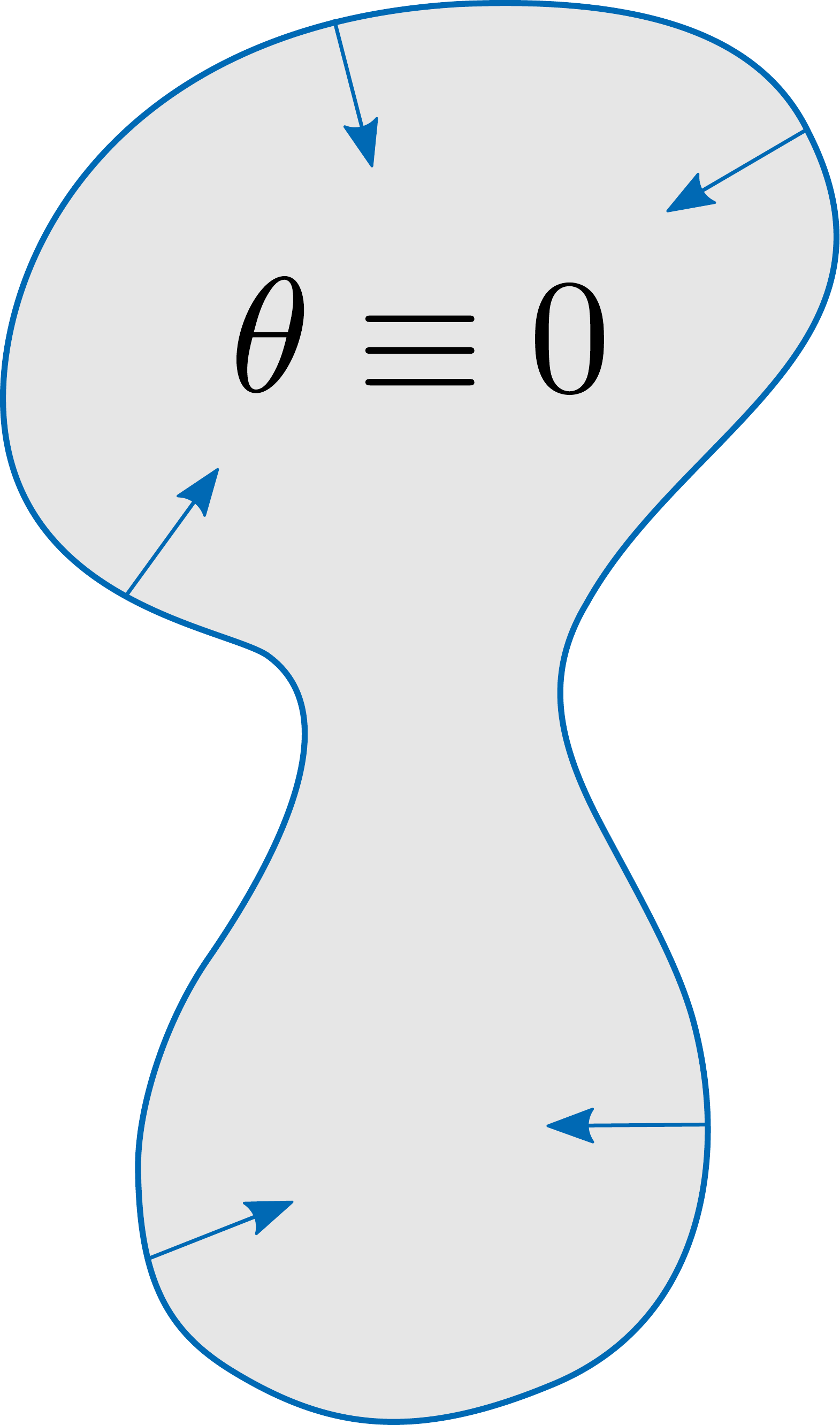}
 \caption{
 The vector $-\nabla \theta$ points into $\{\theta = 0\}$, which shrinks as time 
evolves.
} 
  \label{Pic03}
 \end{figure}
 
This problem belongs to the general class of ``free-boundary problems'', since $\theta$ solves a PDE (the heat equation) inside the time-evolving domain $\{\theta>0\}$ which depends on the solution itself, and so in particular it is an unknown of the problem. In this regard, we say that the boundary $\partial\{\theta>0\}$ is a {\it free-boundary}.

\subsection{From Stefan to the parabolic obstacle problem}
\label{sec:stefan obst}
In order to study this problem it is convenient to perform the so-called Duvaut's transformation \cite{Duv,Duv2}: let
$$
u(t,x):=\int_0^t\theta(s,x)\,ds.
$$
Then $u: \R^+\times \Omega\to \R$ solves the so-called  {\it parabolic obstacle problem}
 \begin{equation}
 \label{eq:parab obst}
\partial_tu=\Delta u-\chi_{\{u>0\}},\qquad u \geq 0,\qquad \partial_tu \geq 0,
\end{equation}
with Dirichlet boundary conditions
$$
u(t,x)=\int_0^t\theta_b(s,x)\,ds\qquad \text{ for  $x\in \partial\Omega$ and $t\geq 0$}
$$
(we shall explain the name ``obstacle problem'' in the next section).

To understand how to obtain \eqref{eq:parab obst} from the Stefan problem, we give here an informal derivation assuming that the set $\partial\{\theta>0\}$  can be represented as the graph of a smooth function $\tau:\Omega\to \R^+$, that is
$$
\partial\{\theta>0\}=\{(t,x)\,:\,t=\tau(x)\}={\rm graph}(\tau).
$$
In other words, $\tau(x)$ represents the moment when the ice present at $x$ melts into water.

Since
\begin{equation}
\label{eq:theta tau}
\theta(\tau(x),x)=0\qquad \forall\, x \in \Omega,
\end{equation}
differentiating this relation with respect to $x$ we obtain
\begin{equation}
\label{eq:ste ob}
0=\nabla\bigl[\theta(\tau(x),x)\bigr]=\partial_t\theta(\tau(x),x)\, \nabla\tau(x)+\nabla\theta(\tau(x),x).
\end{equation}
Also, since
$$
\theta(t,x(t))=0 \quad \text{ for any curve $t\mapsto x(t)\in \partial\{\theta(t)>0\},$}
$$
differentiating this relation in time and using the Stefan condition \eqref{eq:stefan}, we get
\begin{equation}
\label{eq:ste ob2}
0=\frac{d}{dt}\theta(t,x(t))
=\partial_t\theta+\nabla\theta\cdot \dot x(t)=\partial_t\theta-|\nabla\theta|^2\qquad \text{on } \partial\{\theta>0\}.
\end{equation}
Hence, combining \eqref{eq:ste ob} and \eqref{eq:ste ob2}, we deduce that
\begin{equation}
\label{eq:ste ob3}
\nabla \tau(x)\cdot \nabla\theta(\tau(x),x)
=-\frac{|\nabla\theta(\tau(x),x)|^2}{\partial_t \theta(\tau(x),x)}=-1.
\end{equation}
Note that, because the sets $\{\theta(t)=0\}$ shrink in time,  we have
$$
\theta(t,x)=0\qquad \text{for $t \in [0,\tau(x)]$.}
$$
In particular 
$$
u(t,x)=\int_0^t\theta(s,x)\,dx=0 \qquad \text{for $t \leq \tau(x)$,}
$$
which implies that
\begin{equation}
\label{eq:u}
u(t,x)=\int_{\tau(x)}^t\theta(s,x)\,ds\qquad \forall\,t>\tau(x)
\end{equation}
and that
\begin{equation}
\label{eq:u 0}
\{\theta>0\}=\{(t,x)\,:\,t>\tau(x)\}= \{u>0\}.
\end{equation}
We now want to compute the equation for $u$ for $t >\tau(x)$.

Differentiating \eqref{eq:u} with respect to $x_i$ and recalling \eqref{eq:theta tau}, we obtain
$$
\partial_{x_i}u(t,x)=
\int_{\tau(x)}^t\partial_{x_i}\theta(s,x)\,ds-\theta(\tau(x),x)\,\partial_{x_i}\tau(x)
=\int_{\tau(x)}^t\partial_{x_i}\theta(s,x)\,ds.
$$
Differentiating again with respect to $x_i$ yields
$$
\partial_{x_ix_i}u(t,x)=
\int_{\tau(x)}^t\partial_{x_ix_i}\theta(s,x)\,ds-\partial_{x_i}\theta(\tau(x),x)\,\partial_{x_i}\tau(x),
$$
so that summing over $i=1,\ldots,n$ gives
$$
\Delta u(t,x)=
\int_{\tau(x)}^t\Delta \theta(s,x)\,ds-\nabla\theta(\tau(x),x)\cdot\nabla\tau(x).
$$
Hence, since $\theta=\partial_tu$, recalling \eqref{eq:heat}, \eqref{eq:theta tau}, and \eqref{eq:ste ob3}, we obtain
$$
\Delta u(t,x)=
\int_{\tau(x)}^t\partial_t\theta(s,x)\,ds+1=\theta(t,x)+1=
\partial_tu(t,x)+1
$$
inside the region $\{\theta>0\}$.
Recalling \eqref{eq:u 0},
 we proved that
\begin{equation}
\label{eq:u in u pos}
\partial_tu=\Delta u-1\qquad \text{inside }\{u>0\}.
\end{equation}

\smallskip

It is important to remark that \eqref{eq:u in u pos} is not equivalent to \eqref{eq:parab obst}, as the latter equation has a further hidden condition on the free boundary.
To understand this, assume that $v$ solves \eqref{eq:parab obst}. Then, since 
 $$\partial_t v-\Delta v=-\chi_{\{v>0\}}\in L^\infty,$$ it follows by parabolic regularity theory that $v(t) \in C^1(\Omega)$ for any $t>0$. In particular, since $v(t)\geq 0$ and $v(t)|_{\partial\{v(t)>0\}}=0$,  it holds
$$
 \nabla v(t)= 0\qquad \text{on }\partial\{v(t)>0\}.
$$
Hence, we need to show that the function $u(t,x)$ defined in \eqref{eq:u} satisfies also this extra condition. 

To prove this, we recall (see \eqref{eq:theta tau}) that 
$$
0=\theta(\tau(x),x)=\partial_tu(\tau(x),x).
$$
Also, since $u(\tau(x),x)=0$, differentiating this relation with respect to $x$ we get
$$
0=\nabla\bigl[u(\tau(x),x)\bigr]
=\partial_tu(\tau(x),x)\,\nabla\tau(x)+\nabla u(\tau(x),x).
$$
Combining these two relations we obtain
$$
\nabla u(\tau(x),x)= 0,
$$
that is
 \begin{equation}
 \label{eq:zero grad t}
 \nabla u(t)= 0\qquad \text{on }\partial\{u(t)>0\},
 \end{equation}
see Figure \ref{Pic-par-ob}.
  \begin{figure}[ht]
\includegraphics[scale=0.25]{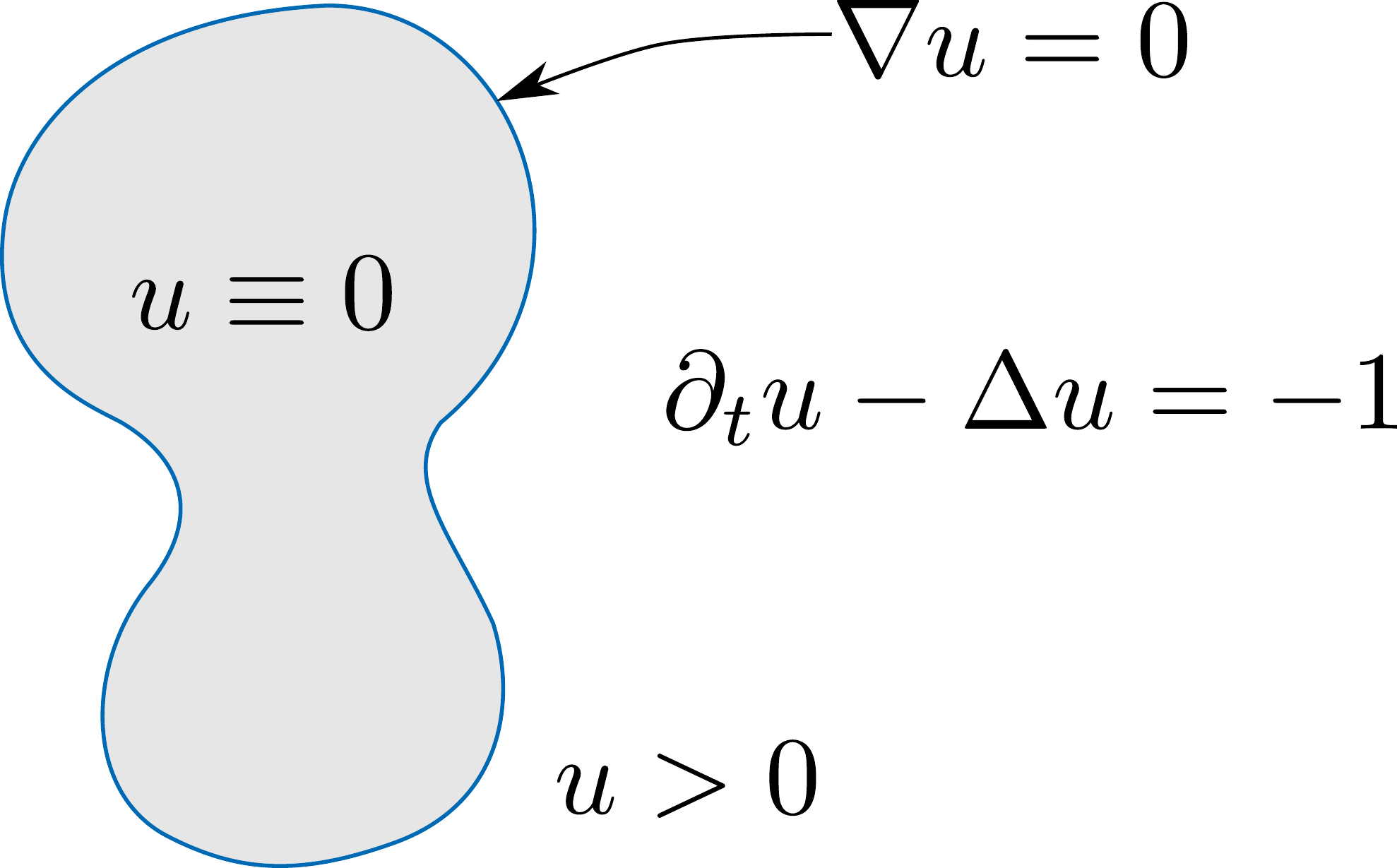} \caption{The parabolic obstacle problem.} 
  \label{Pic-par-ob}
 \end{figure}

\smallskip

It is interesting to observe that, in the equation for $u$, no transmission condition appears on the free boundary (cp. the Stefan condition for $\theta$, see \eqref{eq:stefan}).
Hence, one may wonder what is determining the evolution of the free boundary. This can be explained as follows: 
 recalling \eqref{eq:zero grad t}, we showed that both $u(t)$ and $\nabla u(t)$ vanish on the free boundary for any $t > 0$. One may notice that this is a very strong condition, since we are saying that $u(t)$ solves the parabolic equation
 $$
 \partial_tu=\Delta u-1 \qquad\text{in }\{u>0\}
 $$
 with {\em two} boundary conditions on the free boundary:
 $$
 u=0 \quad \text{and}\quad \nabla u=0\qquad \text{on }\partial\{u>0\},
 $$
 see Figure \ref{Pic-par-ob}.
This is an over-determined system: in classical PDE theory one can only prescribe either Dirichlet or Neumann boundary conditions, but not both! This means that the free boundary has to evolve so to ensure that both boundary conditions hold for every time. In other words, the transmission condition determining the evolution of the free boundary is now hidden inside the equation \eqref{eq:parab obst} via the condition \eqref{eq:zero grad t}.

\smallskip

Thanks to this informal discussion, since $\{\theta>0\}=\{u>0\}$, we have reduced the study of the free boundary in the Stefan problem to the one in the parabolic obstacle problem. Our goal
now is to understand the free boundary regularity in the parabolic obstacle problem.
In order to simplify the analysis, it makes sense to start first from the stationary case where $u$ is independent of time, and then move to the general case. 
This will be the focus of the next sections.

\section{The elliptic obstacle problem}
In this section we study the free boundary regularity in the stationary obstacle problem.

Thus, given a domain $\Omega\subset \R^n$ and some fixed smooth boundary conditions $f:\partial\Omega\to \R$ with $f>0$, we want to investigate the {\em elliptic obstacle problem}
\begin{equation}
\label{eq:obst}
\left\{
\begin{array}{ll}\Delta u=\chi_{\{u>0\}}&\text{in }\Omega,\\
u \geq 0&\text{in }\Omega,\\
u=f &\text{on }\partial\Omega.
\end{array}
\right.
\end{equation}
Note that, because $\Delta u=\chi_{\{u>0\}} \in L^\infty(\Omega)$, it follows by elliptic regularity that $u \in C^1(\Omega)$. In particular, as in the previous section, since $u\geq 0$ and $u|_{\partial\{u>0\}}=0$, one deduces that 
 \begin{equation}
 \label{eq:zero grad}
 \nabla u= 0\qquad \text{on }\partial\{u>0\},
 \end{equation}
see Figure \ref{Pic-ell-ob}.

 \begin{figure}[ht]
\includegraphics[scale=0.25]{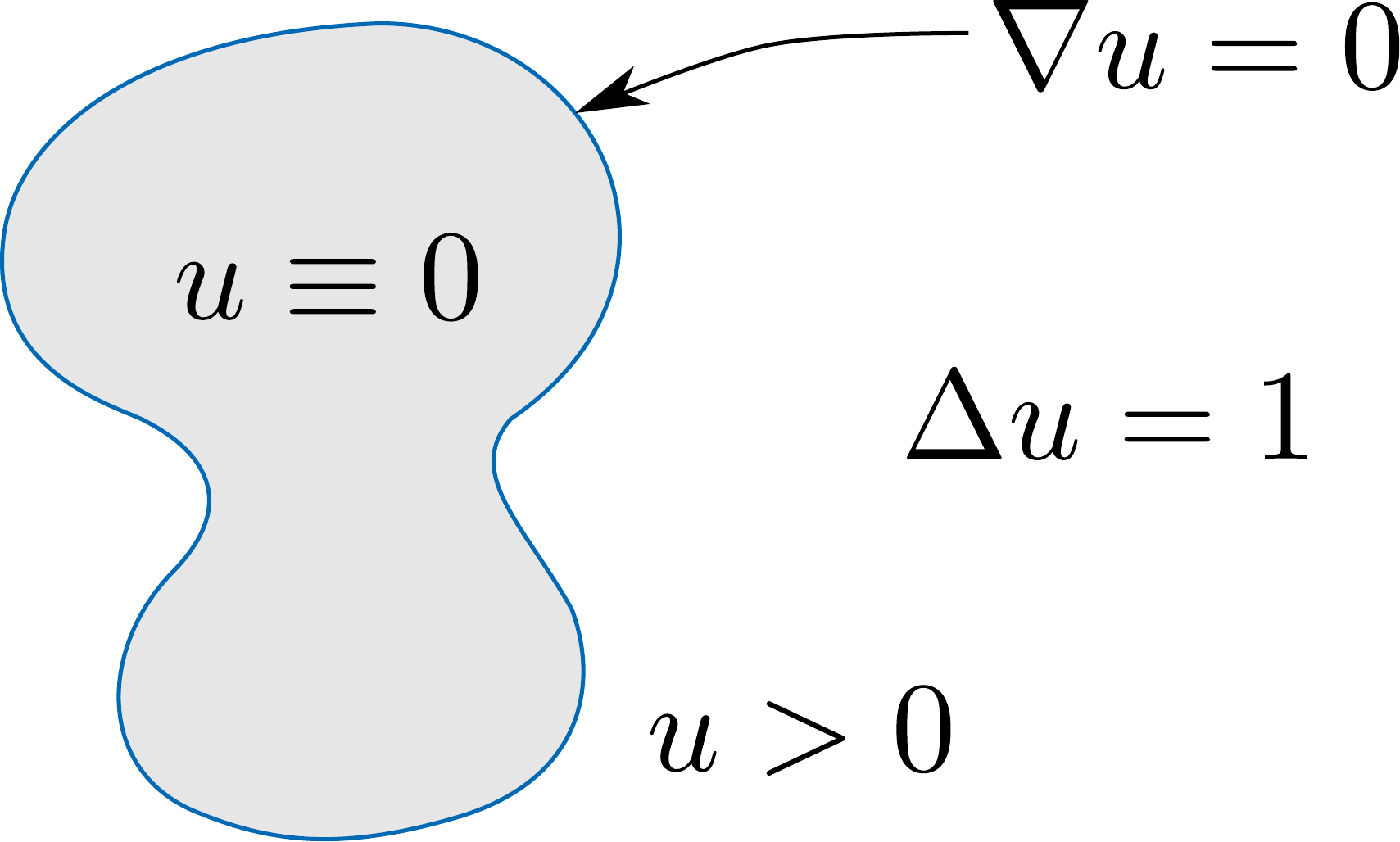} \caption{The stationary obstacle problem.} 
  \label{Pic-ell-ob}
 \end{figure}

Before beginning our study, we first want to explain the origin of the name ``obstacle problem'' associated to \eqref{eq:obst}.

Consider an elastic membrane that coincides with the graph of $f$ on $\partial\Omega$, subject to the action of gravity, and forced to lie above the plane
 $\{x_{n+1}= 0\}$ (the ``obstacle'').
If we represents the membrane as the graph of a nonnegative function $u:\Omega\to \R$, this function minimizes the functional 
\begin{equation}
\label{eq:min obst}
\min_{v\geq 0}\biggl\{\int_{\Omega} \biggl(\frac{|\nabla v|^2}{2}+g\,v\biggr) \,:\,v|_{\partial\Omega}=f \biggr\},
\end{equation}
where\\
$$
\int_{\Omega} \frac{|\nabla v|^2}{2}\quad \text{ represents the elastic energy of the graph of $v$,}
$$
and
$$g\int_{\Omega} v \quad \text{ represents the gravitational energy of the graph of $v$}
$$
 (here $g>0$ is the gravitational constant), see Figure \ref{Pic04}.
\bigskip

 \begin{figure}[ht]
\includegraphics[scale=0.37]{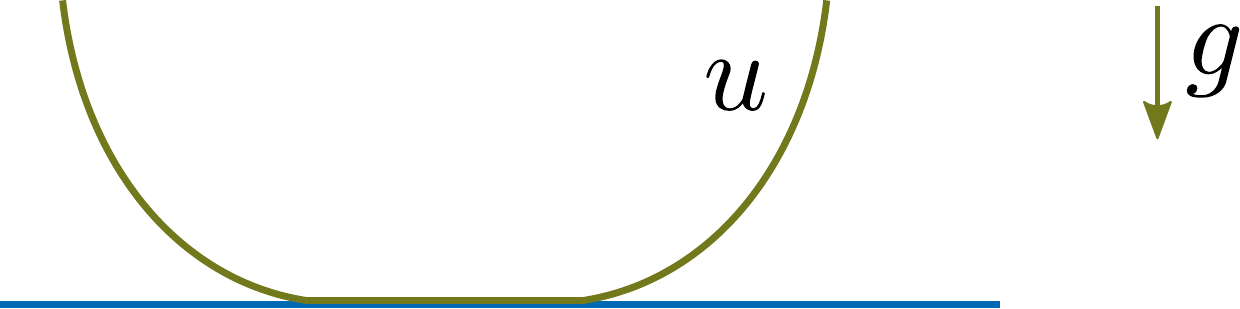}
\caption{The obstacle problem models an elastic surface lying above the plane $\{x_{n+1}= 0\}$ and subject to gravity.} 
\label{Pic04}
\end{figure}

The existence and uniqueness of a minimizer for \eqref{eq:min obst} follows by standard techniques in the calculus of variations, see for instance \cite[Section 2]{F-JEDP}.
Then, computing the Euler-Lagrange equations for the minimizer $u$, one can prove that $u$ satisfies the equation
$$
\Delta u=g\,\chi_{\{u>0\}},
$$
see for instance \cite[Section 3]{F-JEDP}.
In particular, up to replacing $u$ by $u/g$ we can assume that $g=1$, which shows that \eqref{eq:obst} corresponds to the Euler-Lagrange equations associated to the minimization problem \eqref{eq:min obst}.
This justifies the name {\it obstacle problem}.
Also, since the set $\{u=0\}$ corresponds to the region where $u$ touches the obstacle, one refers to $\{u=0\}$ as the {\it contact set}.

\subsection{Regularity properties of $u$}
Let $u$ solve \eqref{eq:obst}.
Since $$\Delta u=\chi_{\{u>0\}} \in L^\infty(\Omega),$$ standard elliptic regularity (see for instance \cite[Corollary 9.10 and Theorem 9.13]{GT01}) guarantees that $u \in W_{\rm loc}^{2,p}(\Omega)$ for any $p<\infty$. In other words
$$
D^2u \in L^p_{\rm loc}(\Omega)\qquad \forall\,p<\infty.
$$
A key result in the theory of obstacle problems states that the estimate above holds even for $p=\infty$, see \cite{Fre72,BK74,C98,F-JEDP}:
\begin{theorem}
\label{thm:C11}
Let $u$ solve \eqref{eq:obst}.
Then
$$
D^2u \in L^\infty_{\rm loc}(\Omega).
$$
\end{theorem}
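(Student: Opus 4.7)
The plan is to reduce the desired $L^\infty$ bound on $D^2u$ to two ingredients: (i) a quadratic growth estimate at free boundary points, namely that for every compact $K\Subset\Omega$ there exists $C$ with
\[
u(x)\leq C|x-x_0|^2\qquad\text{whenever } x_0\in K\cap\partial\{u>0\},\ |x-x_0|\leq r_0,
\]
and (ii) classical interior estimates for the equation $\Delta u=1$ in the positivity set. Granting (i), the $C^{1,1}$ bound is routine: fix $y\in K\cap\{u>0\}$ and set $d:=\dist(y,\{u=0\})$, so that $B_d(y)\subset\{u>0\}$ and $\Delta u=1$ there. Taking $x_0\in\partial\{u>0\}$ a nearest point to $y$, (i) gives $\sup_{B_d(y)}u\leq 4Cd^2$. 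Writing $u(x)=\frac{1}{2n}|x-y|^2+h(x)$ on $B_d(y)$, the remainder $h$ is harmonic with $\|h\|_{L^\infty(B_d(y))}\leq C'd^2$, so interior estimates for harmonic functions yield $|D^2h(y)|\leq C''$ for an absolute constant $C''$, hence $|D^2u(y)|\leq C_0$ independently of $y$. Since $D^2u\equiv 0$ in the interior of $\{u=0\}$, combining the two cases with the $W^{2,p}_{\rm loc}$ regularity of $u$ yields $\|D^2u\|_{L^\infty(K)}\leq C_0$.

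To establish (i), I would argue by contradiction and blow-up. Suppose (i) fails: there exist $x_k\in K\cap\partial\{u>0\}$ and $r_k\downarrow 0$ with $M(x_k,r_k):=\sup_{B_{r_k}(x_k)}u\geq k\,r_k^2$. A doubling-type selection allows us to replace $r_k$ by a radius that almost maximizes $M(x_k,r)/r^2$ over $r\in[r_k,r_0]$, supplying the a priori quadratic control $M(x_k,R r_k)\leq C_0R^2M_k$ for all $R\geq 1$, where $M_k:=M(x_k,r_k)$. The rescalings
\[
v_k(x):=\frac{u(x_k+r_kx)}{M_k}
\]
then satisfy $v_k\geq 0$, $v_k(0)=0$, $\sup_{B_1}v_k=1$, $\sup_{B_R}v_k\leq C_0R^2$ for every $R\geq 1$, and
\[
\Delta v_k=\frac{r_k^2}{M_k}\,\chi_{\{v_k>0\}}\longrightarrow 0\quad\text{in } L^\infty.
\]
Interior $W^{2,p}$ estimates combined with Arzel\`a--Ascoli extract a locally uniform limit $v_\infty:\R^n\to\R$ that is harmonic, nonnegative, at most quadratic at infinity, and satisfies both $v_\infty(0)=0$ and $\sup_{B_1}v_\infty=1$. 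The strong maximum principle then forces $v_\infty\equiv 0$, contradicting the normalization.

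The main obstacle is the preliminary doubling step. Without it, the rescaled $v_k$ would be controlled only on $B_1$, the normalizing unit supremum could escape to $\partial B_1$, and the blow-up limit need not be defined on all of $\R^n$. The doubling/iteration supplies the quadratic a priori bound on every $B_R$ that is needed both to obtain a global limit and to trigger the strong maximum principle for nonnegative harmonic functions. Once (i) is secured, deducing the $C^{1,1}$ bound is a matter of standard interior estimates for Poisson's equation, as carried out in the first paragraph.
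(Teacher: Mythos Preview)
The paper does not actually prove Theorem~\ref{thm:C11}: it states the result and cites \cite{Fre72,BK74,C98,F-JEDP}. So there is no in-paper proof to compare against; one can only assess your argument on its own and against the cited literature.

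Your scheme---first prove quadratic growth at free boundary points by a blow-up/contradiction argument, then upgrade to a pointwise Hessian bound via interior estimates for $\Delta u=1$---is precisely the route taken in \cite{C98} and in the companion notes \cite{F-JEDP}. Note the pleasant reversal of logic relative to the present paper: here Corollary~\ref{cor:C11} (quadratic growth) is \emph{deduced from} Theorem~\ref{thm:C11}, whereas you prove quadratic growth independently and use it to obtain Theorem~\ref{thm:C11}. There is no circularity, since your proof of (i) does not invoke $C^{1,1}$.

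Two small points worth tightening. First, in the reduction step you say $D^2u\equiv 0$ in the interior of $\{u=0\}$; to conclude $D^2u\in L^\infty_{\rm loc}$ you also need $D^2u=0$ a.e.\ on the full contact set $\{u=0\}$ (including the free boundary), which follows from the standard fact that a $W^{1,p}$ function has vanishing gradient a.e.\ on its level sets, applied to each $\partial_i u$. Second, your description of the doubling selection is correct in spirit but slightly loose: the clean way is to work with the monotone quantity $\bar u(r):=\sup_{x_0\in K\cap\partial\{u>0\}}\sup_{B_r(x_0)}u$, observe that $\bar u(r)/r^2$ is continuous with a finite value at $r=r_0$, and for each $k$ take $r_k$ to be the largest radius in $(0,r_0]$ with $\bar u(r_k)/r_k^2=k$; this yields $\bar u(Rr_k)\le R^2\bar u(r_k)$ for all $1\le R\le r_0/r_k$, exactly the a priori bound you need. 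With these adjustments, the contradiction via Liouville for nonnegative harmonic functions (what you call the strong maximum principle) goes through, and the proof is complete.
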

It is worth noticing that  the  result above is optimal.
Indeed, since $\Delta u=\chi_{\{u>0\}}$ is discontinuous, the statement $u \in C^2(\Omega)$ is clearly false. Thus the boundedness of $D^2u$ is the best one can hope for.

As a consequence of 
Theorem \ref{thm:C11}, we deduce that $u$ grows at most quadratically away from the free boundary.
\begin{corollary}
\label{cor:C11}
Let $u$ solve \eqref{eq:obst},
let $\Omega'\subset \subset \Omega$,
let $x_0 \in \partial\{u>0\}$, and assume that $B_{r}(x_0)\subset \Omega'$.
Then there exists $C=C(\Omega')$ such that
$$
0\leq \sup_{B_r(x_0)}u\leq C\,r^2.
$$
\end{corollary}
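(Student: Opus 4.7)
The plan is to derive the quadratic growth bound directly from the $C^{1,1}$ regularity provided by Theorem \ref{thm:C11}, combined with the boundary conditions at free boundary points.

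First, I would observe that since $x_0 \in \partial\{u>0\}$, we have $u(x_0) = 0$, and by the free boundary condition \eqref{eq:zero grad} (which follows from $u \in C^1$, $u \geq 0$, and $u|_{\partial\{u>0\}} = 0$), we also have $\nabla u(x_0) = 0$. Thus at a free boundary point, both the zeroth and first order Taylor data vanish.

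Next, I would invoke Theorem \ref{thm:C11} on a slightly enlarged subdomain $\Omega''$ with $\Omega' \subset\subset \Omega'' \subset\subset \Omega$, obtaining a constant $C_0 = \|D^2 u\|_{L^\infty(\Omega')}$ depending only on $\Omega'$ (and $\Omega$, $f$). Then, for any $y \in B_r(x_0) \subset \Omega'$, a Taylor expansion along the segment from $x_0$ to $y$ (using that $u \in W^{2,\infty}_{\rm loc}$, so the second-order Taylor remainder is controlled by $\|D^2 u\|_{L^\infty}$) gives
\begin{equation*}
u(y) = u(x_0) + \nabla u(x_0)\cdot(y-x_0) + \int_0^1 (1-t)\, \langle D^2 u(x_0+t(y-x_0))(y-x_0), y-x_0\rangle\,dt.
\end{equation*}
The first two terms vanish by the previous step, and the integral is bounded by $\tfrac{1}{2} C_0 |y-x_0|^2 \leq \tfrac{1}{2} C_0 r^2$. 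Since $u \geq 0$ by \eqref{eq:obst}, taking the supremum over $y \in B_r(x_0)$ yields $0 \leq \sup_{B_r(x_0)} u \leq C r^2$ with $C = C_0/2$.

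There is no real obstacle here: the corollary is essentially a tautological consequence of the $C^{1,1}$ bound together with the vanishing of $u$ and $\nabla u$ on the free boundary. The only mild subtlety is ensuring that the Taylor remainder formula applies to $W^{2,\infty}$ (as opposed to $C^2$) functions, but this follows from the fundamental theorem of calculus applied twice along a line segment, which is valid for $W^{2,\infty}_{\rm loc}$ functions.
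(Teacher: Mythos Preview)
Your proof is correct and follows exactly the same approach as the paper: both use the integral-remainder Taylor formula at $x_0$, combine it with $u(x_0)=\nabla u(x_0)=0$ and the bound $\|D^2u\|_{L^\infty(\Omega')}\le C$ from Theorem~\ref{thm:C11}, and conclude $0\le u\le \tfrac{C}{2}r^2$ on $B_r(x_0)$. Your remark on the validity of the Taylor remainder for $W^{2,\infty}$ functions is a small extra care, but otherwise the arguments coincide.
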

\begin{proof}
Given $x \in B_r(x_0)$, we can write $u(x)$ using the Taylor formula centered at $x_0$:
\begin{multline*}
u(x)=u(x_0)+\nabla u(x_0)\cdot (x-x_0)\\
+\int_0^1(1-t)D^2u\bigl(x_0+t(x-x_0)\bigr)[x-x_0,x-x_0]\,dt.
\end{multline*}
Since $u(x_0)=0$ and $\nabla u(x_0)=0$,
setting  $C:=\|D^2u\|_{L^\infty(\Omega')}$ 
we get
$$
0\leq u(x)\leq  \frac{C}{2}|x-x_0|^2
\leq  \frac{C}{2}r^2,
$$
as desired.
\end{proof}

As shown in \cite{C77,C98,F-JEDP}, 
the upper bound is always attained.
\begin{proposition}
\label{prop:non deg}
Let $u$ solve \eqref{eq:obst},
let $x_0 \in \partial\{u>0\}$, and assume that $B_{r}(x_0)\subset \Omega$.
Then there exists  a dimensional constant $c=c(n)>0$ such that
$$
\sup_{B_r(x_0)}u\geq c\,r^2.
$$
\end{proposition}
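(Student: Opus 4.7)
The plan is to use a comparison argument: near any free boundary point $x_0$, there must be some nearby point where $\Delta u = 1$, and on such a set we can compare $u$ with a quadratic function of the same Laplacian and exploit the maximum principle.

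More precisely, I would first pick an auxiliary point $x_1 \in \{u>0\} \cap B_r(x_0)$; such a point exists since $x_0$ is a free boundary point and hence is approached by points where $u>0$. Set $\rho := r - |x_1-x_0|>0$, so that $B_\rho(x_1) \subset B_r(x_0)$, and define the barrier
\[
w(x) := u(x) - \frac{|x-x_1|^2}{2n}.
\]
On the open set $D := \{u>0\} \cap B_\rho(x_1)$ we have $\Delta w = \chi_{\{u>0\}} - 1 = 0$, so $w$ is harmonic on $D$. Since $w(x_1) = u(x_1) > 0$, the supremum of $w$ over $\overline{D}$ is strictly positive.

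Next I would analyze where this supremum is attained. By the maximum principle it is attained on $\partial D$, which is contained in $\partial\{u>0\} \cup \partial B_\rho(x_1)$. On $\partial\{u>0\}$ one has $u=0$, hence $w \leq 0$ there; so the maximum must in fact lie on $\partial B_\rho(x_1)$, at some point $y$. Then
\[
u(y) - \frac{\rho^2}{2n} = w(y) \geq w(x_1) = u(x_1) \geq 0,
\]
which gives $u(y) \geq \rho^2/(2n)$. Since $y \in \partial B_\rho(x_1) \subset \overline{B_r(x_0)}$, this furnishes a point in the closed ball where $u$ is at least $\rho^2/(2n)$.

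Finally, I would let $x_1 \to x_0$ along $\{u>0\}$, so that $\rho \to r$; together with continuity of $u$ (which allows one to pass from sup over the closed ball to sup over the open ball) this yields
\[
\sup_{B_r(x_0)} u \geq \frac{r^2}{2n},
\]
proving the proposition with $c = 1/(2n)$. I do not expect any serious obstacle: the only minor subtlety is that $\partial\{u>0\}$ can be quite irregular, but this is irrelevant for the maximum principle on $D$, which needs no regularity of its boundary — only the fact that $u$ (hence $w$) is continuous up to $\overline{D}$ and that $w \leq 0$ on the portion of $\partial D$ lying in $\partial\{u>0\}$.
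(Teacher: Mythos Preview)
Your argument is correct and is precisely the classical non-degeneracy proof due to Caffarelli: the auxiliary function $w(x)=u(x)-\frac{|x-x_1|^2}{2n}$, harmonic in $\{u>0\}\cap B_\rho(x_1)$, combined with the maximum principle and a limit $x_1\to x_0$ along $\{u>0\}$. The paper does not write out a proof of this proposition but simply refers to \cite{C77,C98,F-JEDP}, where exactly this argument appears; so your proposal matches the intended proof.
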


\subsection{Blow-up analysis and Caffarelli's dichotomy}
Thanks to Corollary \ref{cor:C11} and Proposition \ref{prop:non deg}, we know that
$$
\sup_{B_r(x_0)}u\simeq r^2 \qquad \forall\, x_0\in \partial\{u>0\}.
$$
This suggests the following rescaling:
for $x_0\in \partial\{u>0\}$ and $r>0$ small, we define the family of functions
\begin{equation}
\label{eq:blow family}
u_{x_0,r}(x):=\frac{u(x_0+rx)}{r^2}.
\end{equation}
In this way, recalling Theorem \ref{thm:C11}, we get (see Figure \ref{Pic06-7}):
\begin{enumerate}
\item[$\bullet$] $u_{x_0,r}(0)=0,\qquad \sup_{B_1}u_{x_0,r}\simeq  1;
$
\item[$\bullet$]
$
|D^2u_{x_0,r}|(x)=|D^2u|(x_0+rx)\leq C.
$
\end{enumerate}
 \begin{figure}[ht]
\includegraphics[scale=0.25]{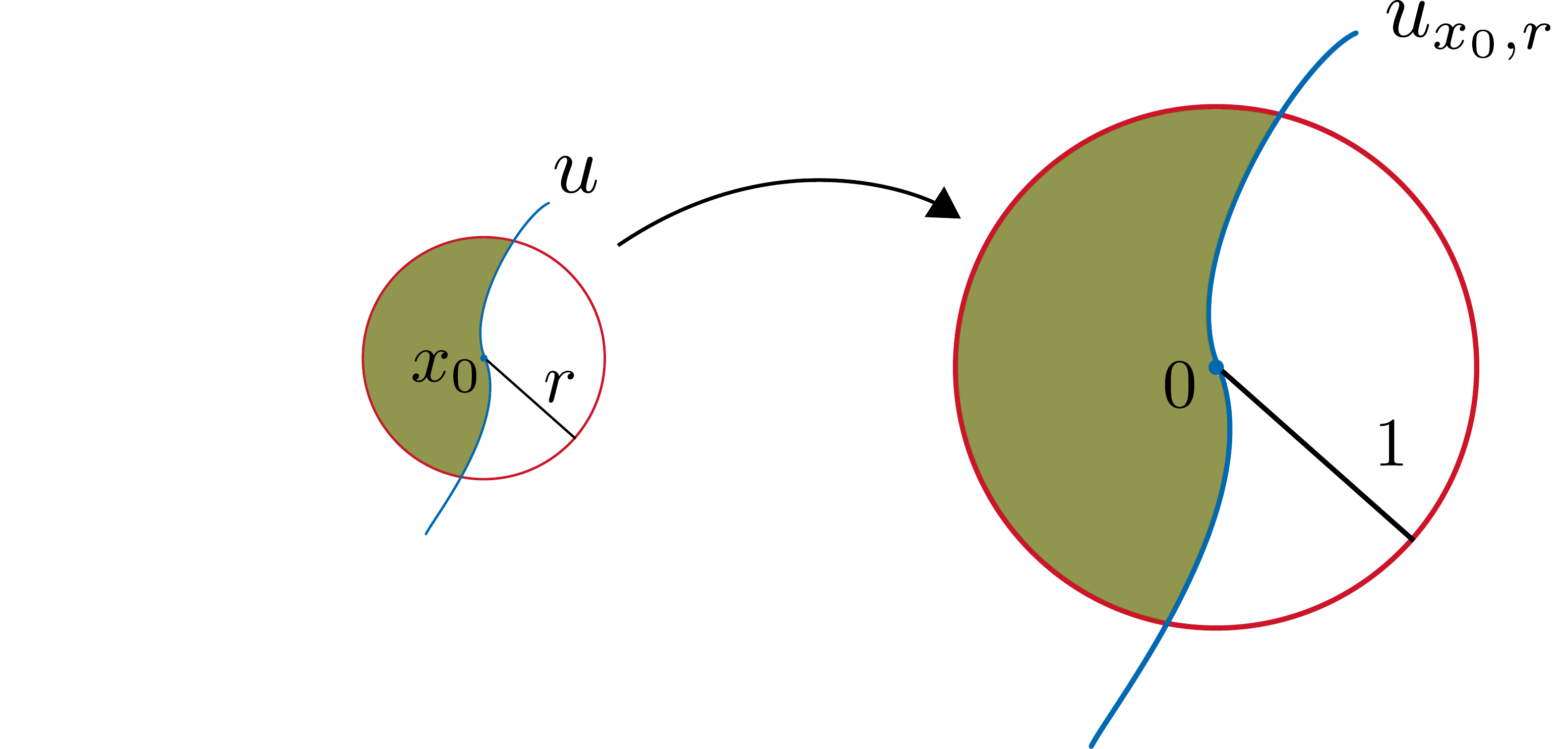}
\caption{By scaling, we look at functions of size $1$ defined inside $B_1$.} 
\label{Pic06-7}
\end{figure}
Thanks to these bounds, it follows by Ascoli-Arzel\`a Theorem that the family of functions  $\{u_{x_0,r}\}_{r>0}$
is compact in $C^1$. So, one can consider a possible limit (up to subsequences) as $r\to 0^+.$ Such a limit is called a {\em blow-up of $u$ at $x_0$}.

The first goal is to classify the possible blow-ups, since they give us information on the infinitesimal behavior of $u$ near $x_0$.
We begin by considering two possible natural type of blow-ups that one may find.

\subsubsection{Regular free boundary points}
Let us first consider the case when the free boundary is smooth near $x_0$, with $u>0$ on one side and $u=0$ on the other side.
In this case, as we rescale $u$ around $x_0$ we expect in the limit to see a one dimensional ``half-parabola'', see Figure \ref{Fig-blow1}.

\begin{figure}[ht]
\includegraphics[width=0.3\textwidth]{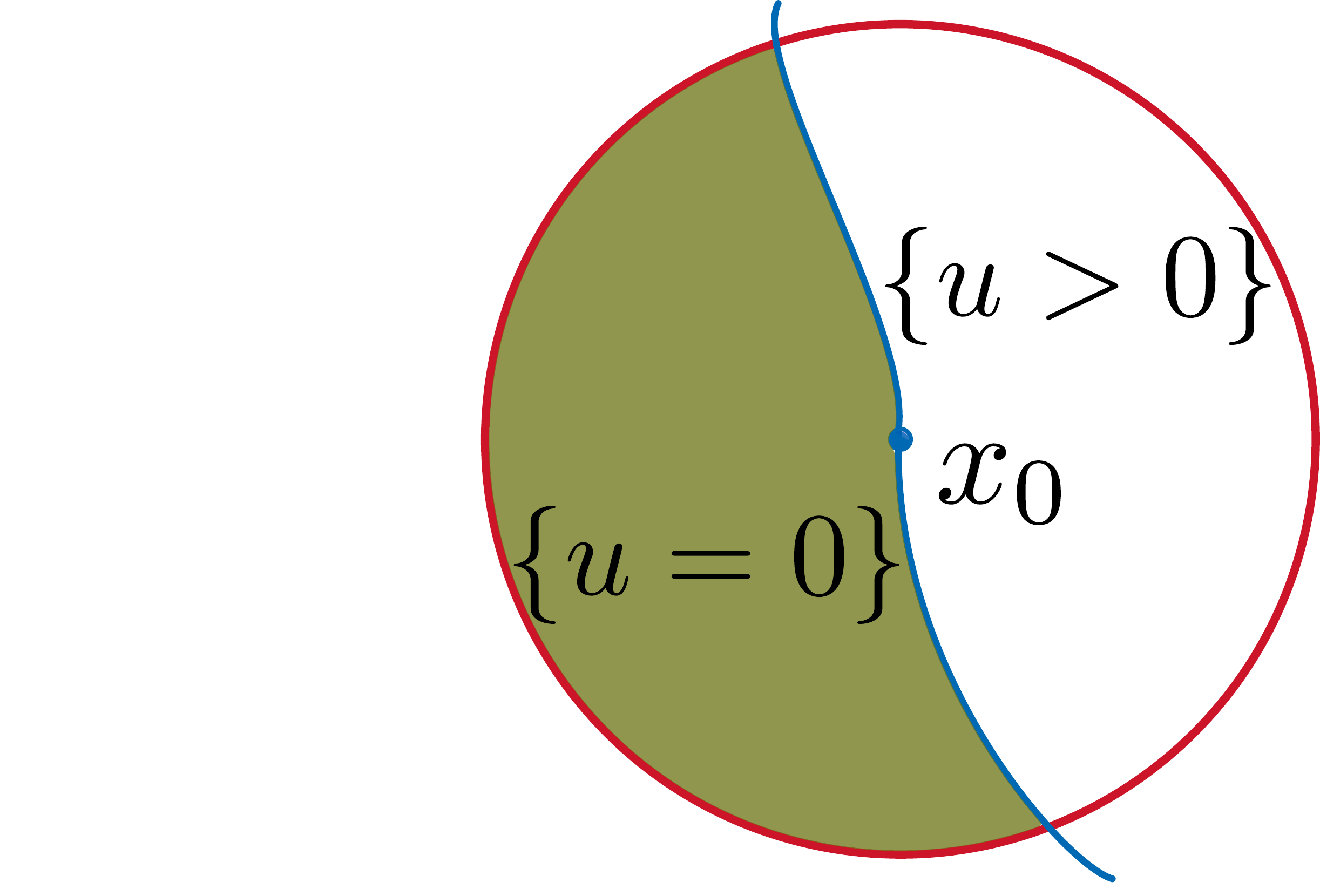}
\hspace{0.6cm}
\includegraphics[width=0.28\textwidth]{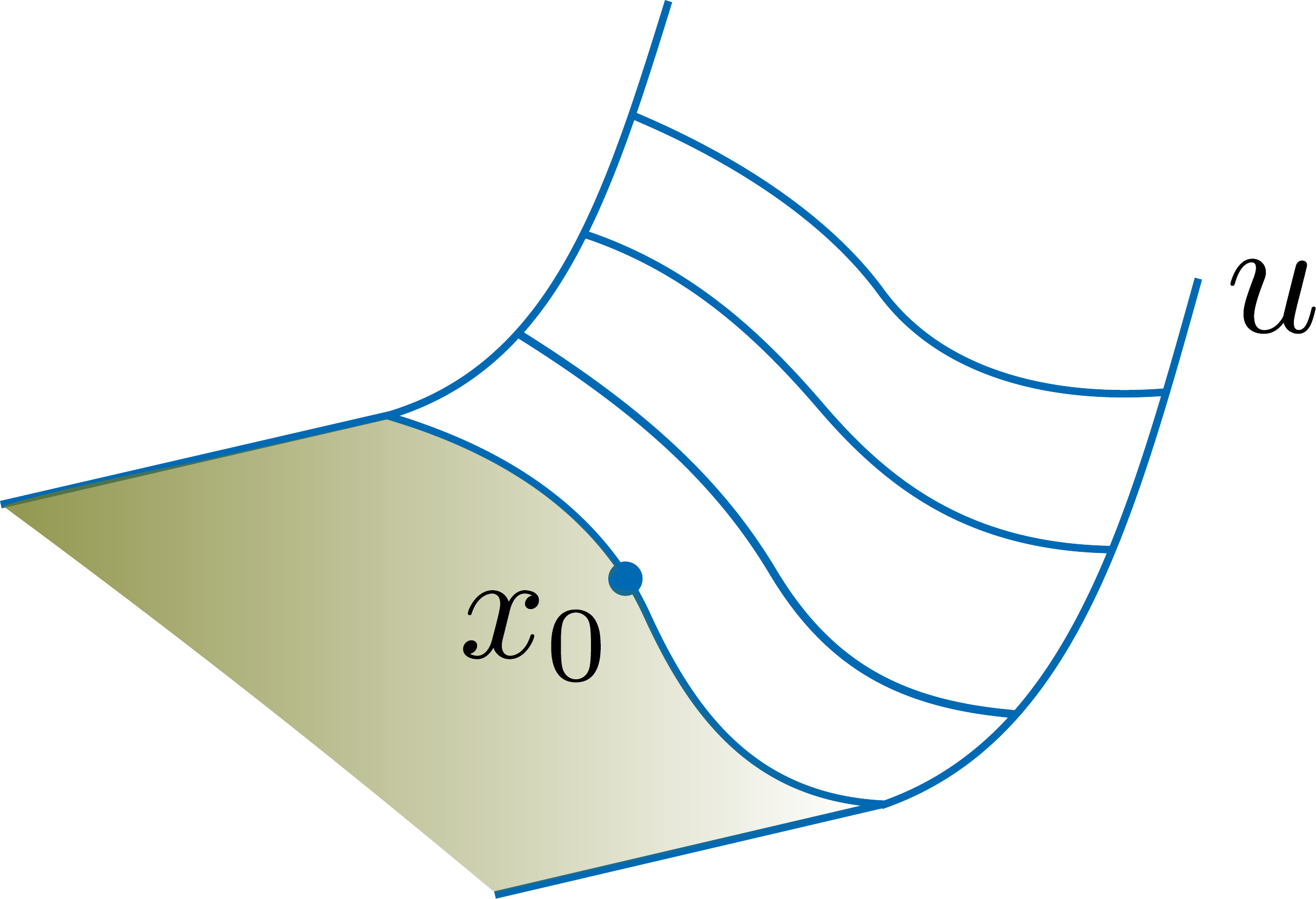}
\hspace{0.4cm}
\includegraphics[width=0.29\textwidth]{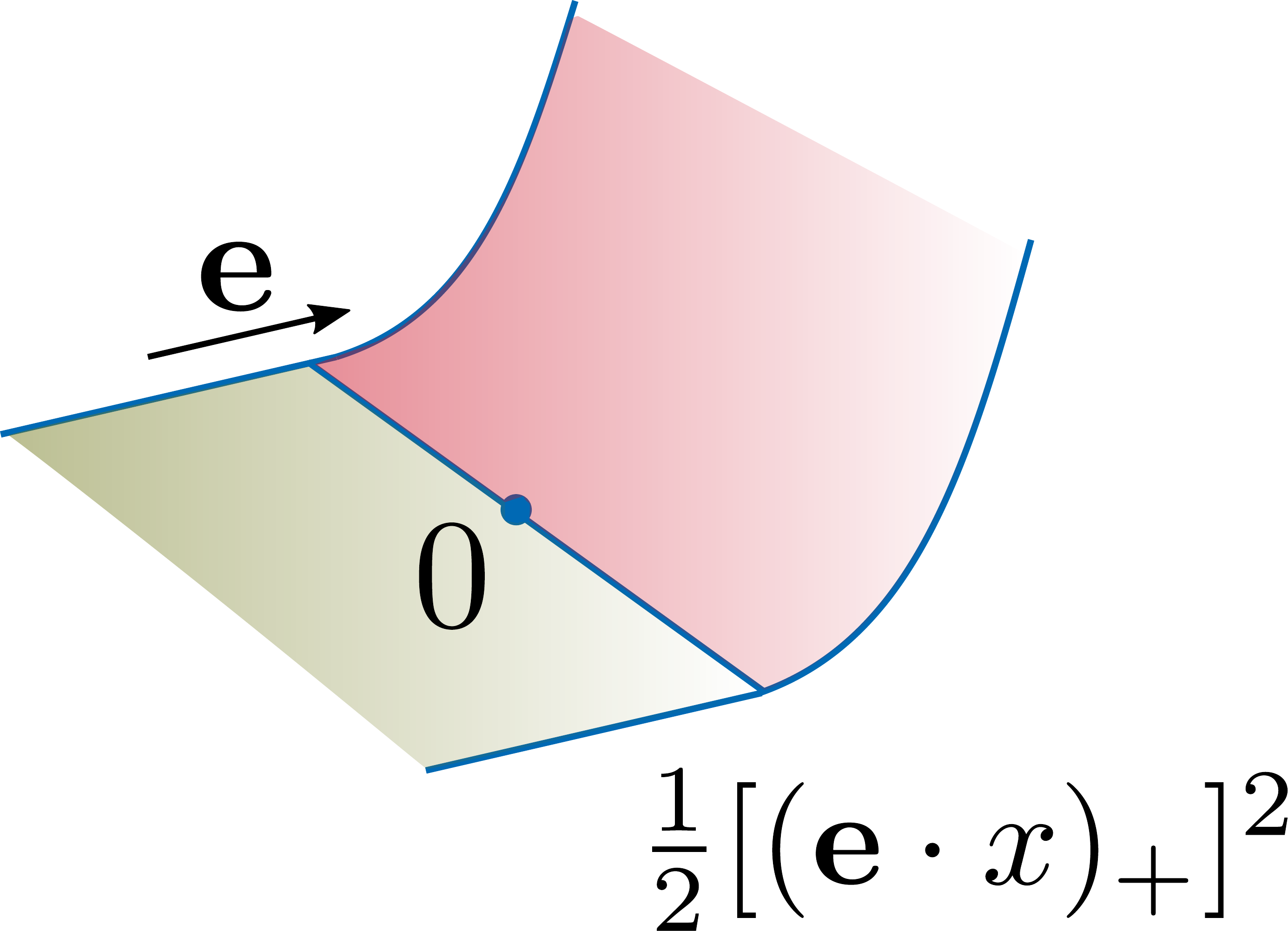}
\caption{Performing a blow-up near a ``thick'' free boundary point.}
\label{Fig-blow1}
\end{figure}

This motivates the following:
\begin{definition}
\label{def:reg pt}
A free boundary point $x_0\in \partial\{u>0\}$ is called a {\it regular point} if, up to a subsequence of radii,
$$
\frac{u(x_0+rx)}{r^2}\to \frac12 [(\mathbf{e}\cdot x)_+]^2\qquad \text{as }r\to 0^+
$$
for some unit vector $\mathbf{e}\in \mathbb S^{n-1}$.
\end{definition}

\subsubsection{Singular free boundary points}
Now, imagine that the contact-set is very narrow near $x_0$. Since $\Delta u=1$ outside of the contact set,
as
we rescale $u$ around $x_0$ we expect to see in the limit a function that has Laplacian equal to $1$ everywhere.
In dimension two, a natural behavior that one may expect to observe is represented in Figure \ref{Fig-blow2}.

\begin{figure}[ht]
\includegraphics[width=0.33\textwidth]{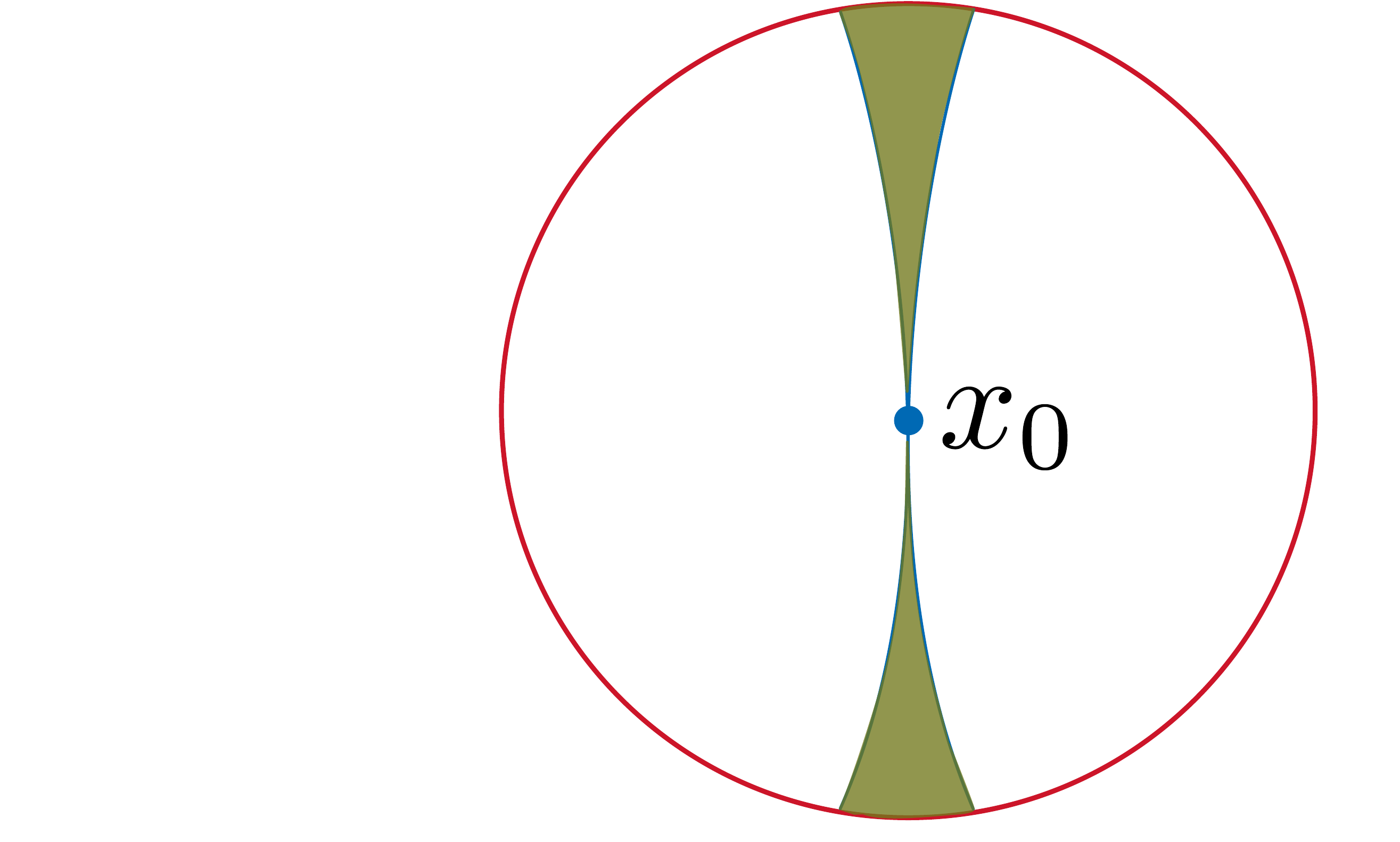}
\hspace{-0.75cm}
\includegraphics[width=0.33\textwidth]{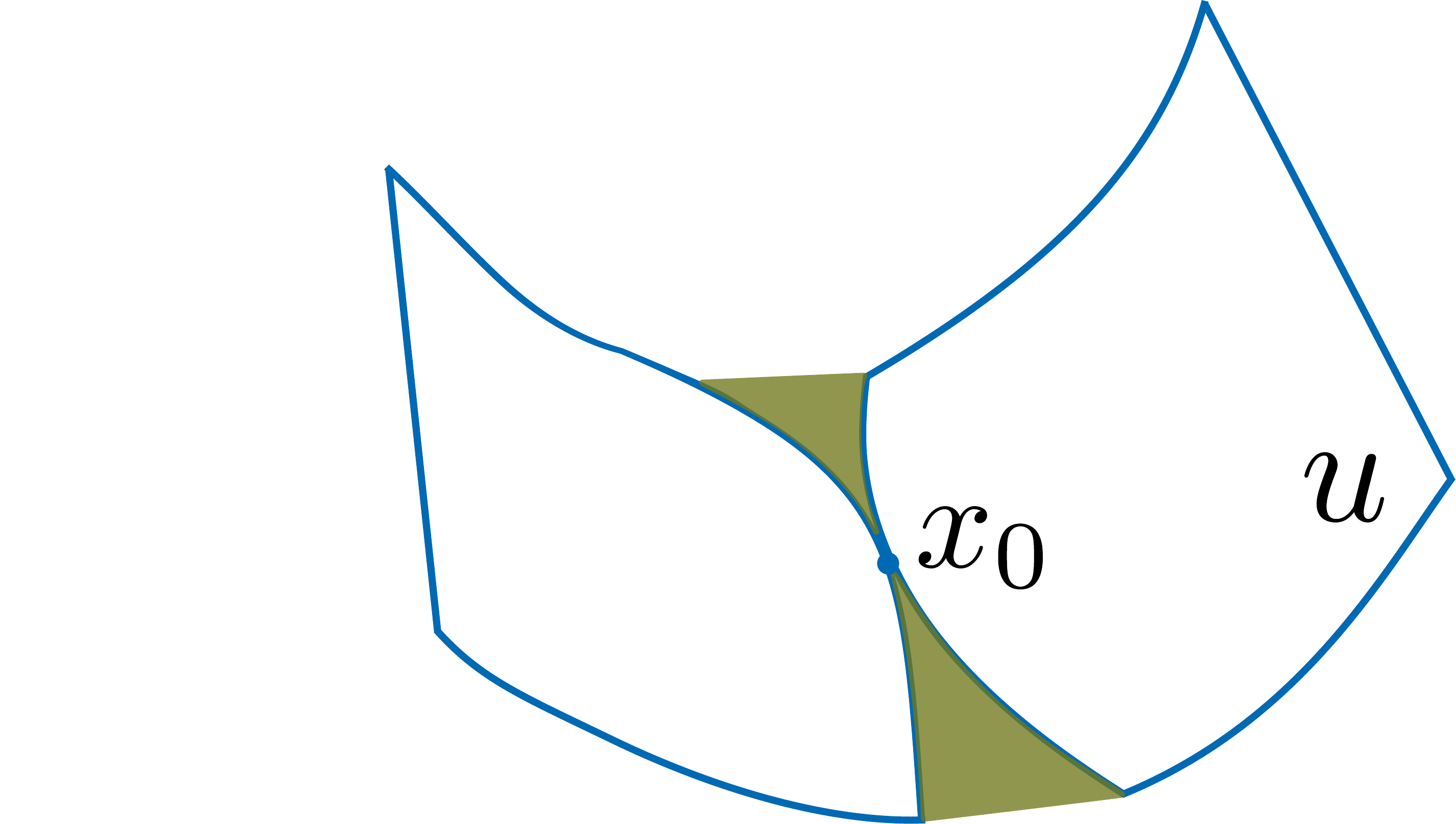}
\hspace{0.5cm}
\includegraphics[width=0.32\textwidth]{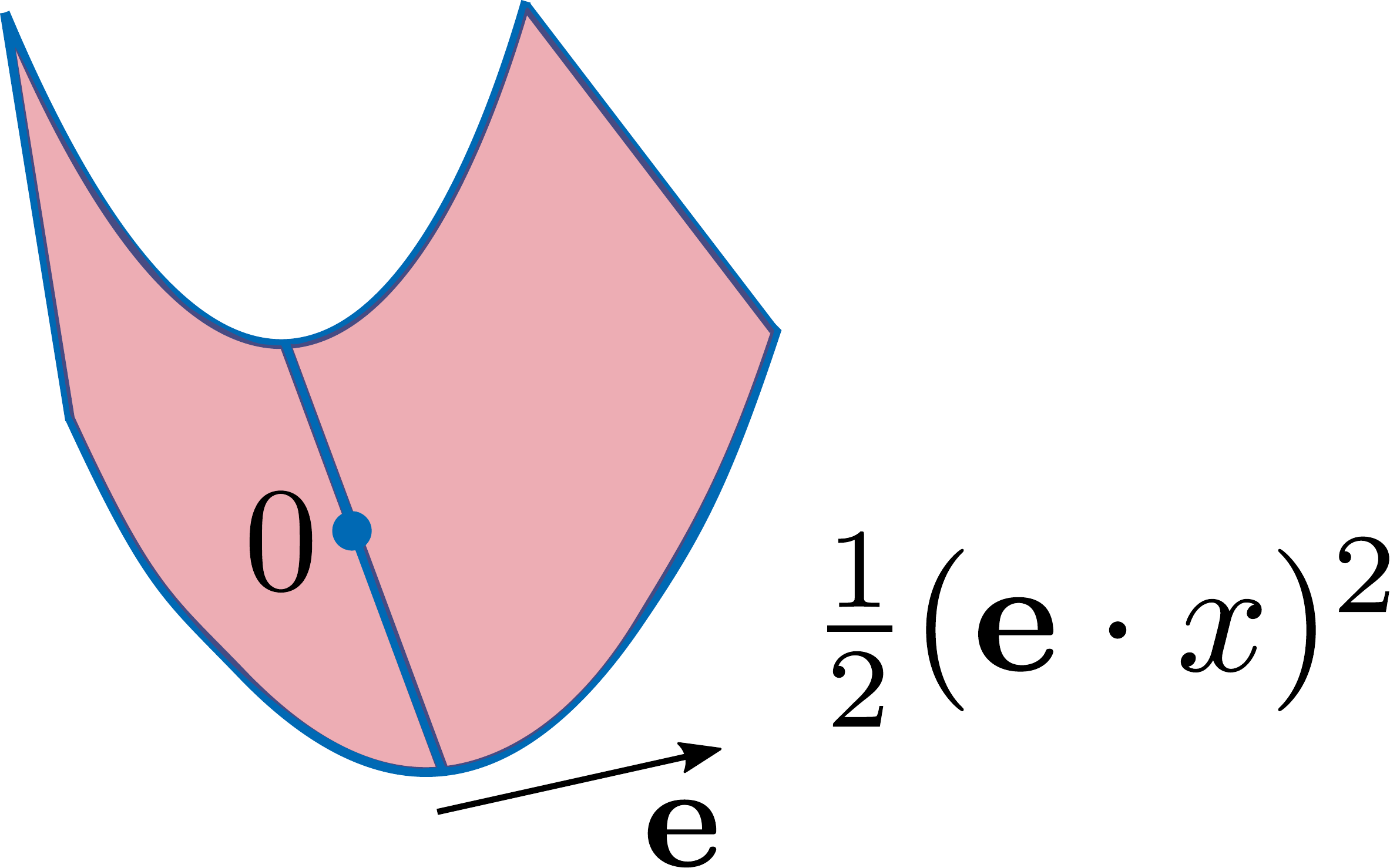}
\caption{Performing a blow-up near a ``thin'' free boundary point.}
\label{Fig-blow2}
\end{figure}
More in general, since any nonnegative quadratic polynomial with Laplacian equal to 1 solves \eqref{Pic-ell-ob},
one introduces the following:
\begin{definition}
\label{def:sing pt}
A free boundary point $x_0\in \partial\{u>0\}$ is called a {\it singular point} if, up to a subsequence of radii,
$$
\frac{u(x_0+rx)}{r^2}\to p(x):=\frac12\langle Ax,x\rangle \qquad \text{as }r\to 0^+
$$
for some nonnegative definite matrix $A \in \R^{n\times n}$ with ${\rm tr}(A)=1$.
\end{definition} 
It is worth noticing that the form of the polynomial $p$ is strictly related to the shape of the contact set near the origin.
For instance, if $n=3$ and 
$p(x)=\frac{1}2(\mathbf{e}\cdot x)^2$ for some unit  vector $\mathbf{e}\in \mathbb S^{2}$, then the contact set will be close to a 2-dimensional plane, see Figure \ref{Fig-blow2bis}.
\begin{figure}[ht]
\includegraphics[width=0.3\textwidth]{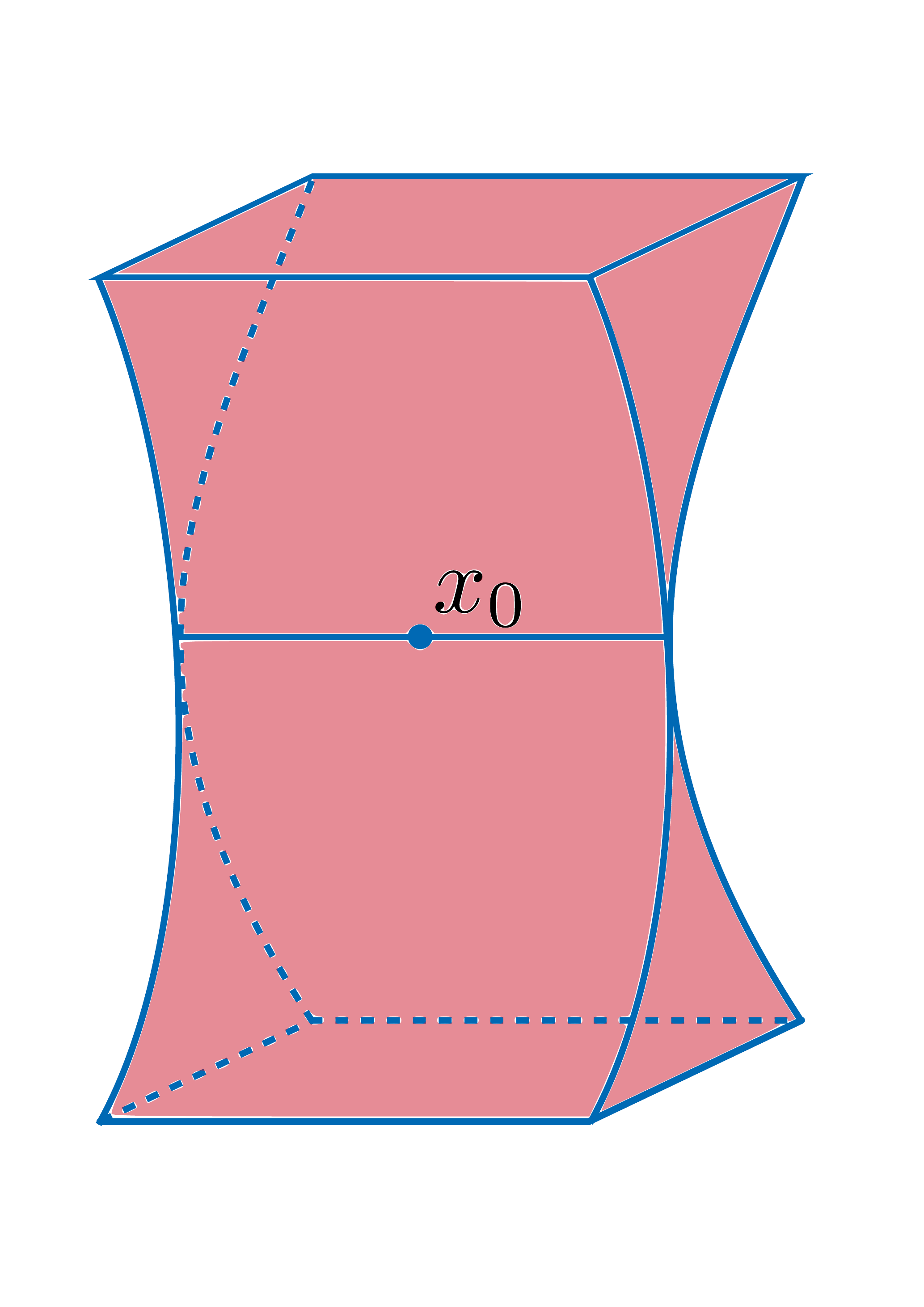}
\caption{A possible contact set near a singular point in dimension 3.}
\label{Fig-blow2bis}
\end{figure}

 On the other hand one could also see points where the contact set is close to a line, which may correspond for instance to a polynomial of the form $p(x)=\frac{1}4(x_1^2+x_2^2)$, see Figure \ref{Fig-blow2bis3}.
\begin{figure}[ht]
\includegraphics[width=0.12\textwidth]{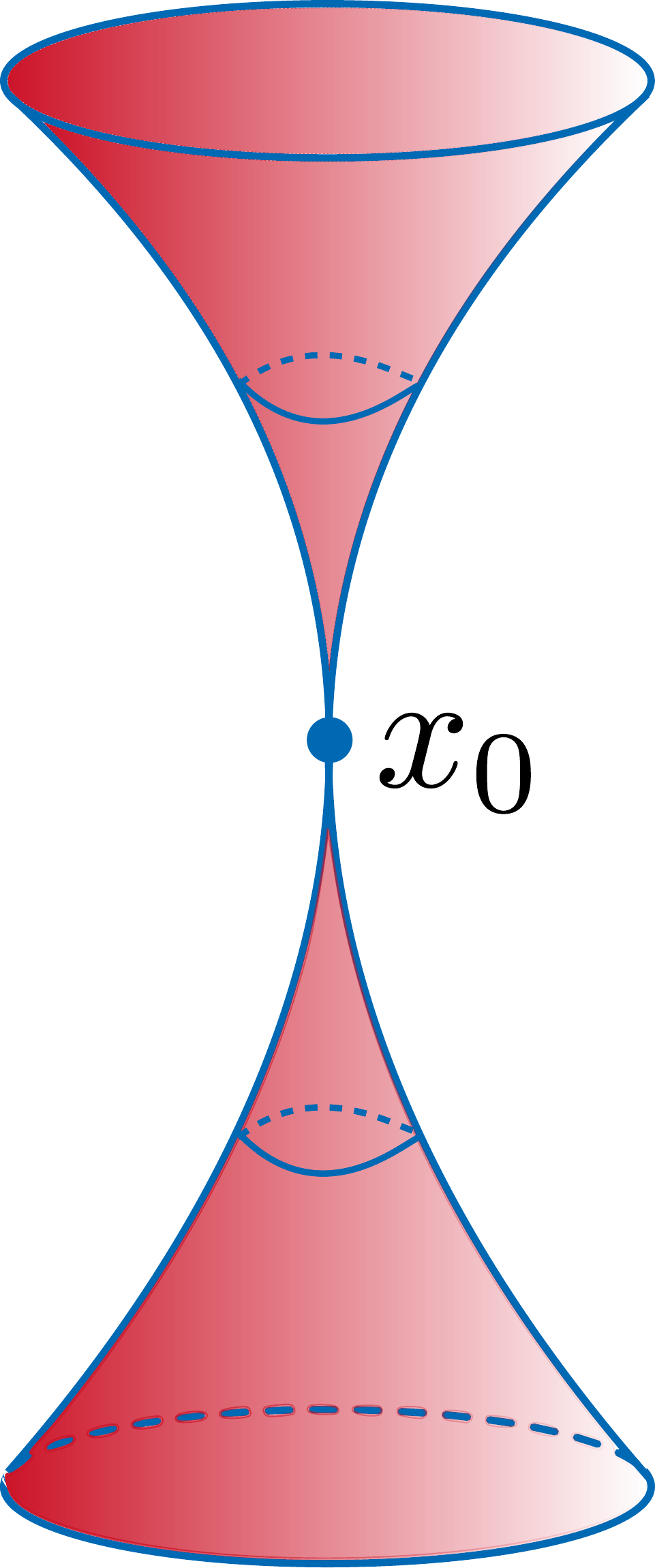}
\caption{A possible contact set near a singular point in dimension 3.}
\label{Fig-blow2bis3}
\end{figure}

\subsubsection{Caffarelli's dichotomy theorem}
Notice that, at the moment, the definitions of regular and singular points may not be mutually exclusive, since a free boundary point point could be regular along some sequence of radii and singular along a different sequence. Also, it is not clear that regular and singular points should exhaust the whole free boundary.

These highly nontrivial and deep issues have been answered by Caffarelli in \cite{C77}:

\begin{theorem}
\label{thm:dico}
Let $x_0 \in \partial\{u>0\}$. 
Then one of these two alternatives hold (see Figure \ref{Fig-reg-sing}):
\begin{enumerate}
\item[(i)]
either $x_0$ is regular, and then there exists a radius $r_0>0$ such that $\partial\{u>0\}\cap B_{r_0}(x_0)$ is an analytic hypersurface consisting only of regular points;
\item[(ii)]
or $x_0$ is singular, in which case for any $r>0$ there exists a unit vector $\mathbf e_r \in \mathbb S^{n-1}$ such that $$\partial\{u>0\}\cap B_{r}(x_0)\subset \bigl\{x\,:\,|\mathbf{e}_r\cdot (x-x_0)|\leq o(r)\bigr\}.$$
\end{enumerate}
\end{theorem}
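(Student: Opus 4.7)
The plan is to combine a blow-up/classification argument with two separate regularity mechanisms, one in each case of the dichotomy. First, by Theorem \ref{thm:C11} and Corollary \ref{cor:C11}, the rescaled family $u_{x_0,r}$ defined in \eqref{eq:blow family} is bounded in $C^{1,1}_{\mathrm{loc}}(\R^n)$, so Ascoli--Arzel\`a gives a subsequence converging in $C^{1,\alpha}_{\mathrm{loc}}$ to a \emph{global solution} $u_0:\R^n\to\R$ of the obstacle problem, i.e.\ $u_0\geq 0$, $\Delta u_0=\chi_{\{u_0>0\}}$, with quadratic growth, $u_0(0)=0$, and $\nabla u_0(0)=0$. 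The core analytic step is the \emph{classification} of such global solutions: any blow-up must be either a half-space solution $\tfrac12[(\mathbf{e}\cdot x)_+]^2$ or a homogeneous quadratic polynomial $p(x)=\tfrac12\langle Ax,x\rangle$ with $A\geq 0$ and $\mathrm{tr}(A)=1$. The strategy I would use is to first prove that every blow-up is \emph{convex}: apply the Alt--Caffarelli--Friedman monotonicity formula to the positive and negative parts of directional derivatives $\partial_e u_0$, which are nonnegative subharmonic functions with disjoint supports whenever the contact set is thick enough; this forces $D^2 u_0\geq 0$. Once convexity is known, the contact set $\{u_0=0\}$ is convex, and a case analysis on its dimension produces exactly the two families above.

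The dichotomy itself then follows from the classification: let $\Lambda=\{u=0\}$ be the contact set. If the Lebesgue density of $\Lambda$ at $x_0$ is zero, no half-space blow-up is possible (since a half-space solution has contact set of density $1/2$), so every blow-up is a polynomial $p$ of the second type, which gives alternative (ii). Conversely, if the density is positive, then at least one blow-up is a half-space solution, which places $x_0$ in case (i).

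In case (i), once we know one blow-up is $\tfrac12[(\mathbf{e}\cdot x)_+]^2$, we apply Caffarelli's flatness-implies-$C^{1,\alpha}$ theorem: for $r$ small, $u_{x_0,r}$ is sufficiently close to the half-space profile on $B_1$, and this $\eps$-flatness propagates to a neighborhood. This produces a radius $r_0$ on which every free boundary point has a half-space blow-up (so is regular), and simultaneously gives that $\partial\{u>0\}\cap B_{r_0}(x_0)$ is a $C^{1,\alpha}$ graph. Classical elliptic boundary regularity (hodograph-type transformations, Kinderlehrer--Nirenberg) then bootstraps from $C^{1,\alpha}$ all the way to analyticity.

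In case (ii), the content of the statement is a quantitative flatness of the free boundary at scale $r$ in some direction $\mathbf{e}_r$, with $o(r)$-small thickness. If one could show that the singular blow-up $p(x)=\tfrac12\langle Ax,x\rangle$ is unique (independent of the subsequence), then writing the free boundary as the zero set of $u_{x_0,r}$ and using $u_{x_0,r}\to p$ in $C^{1}_{\mathrm{loc}}$, the free boundary at scale $r$ would collapse onto $\{p=0\}=\ker A$, which is contained in a hyperplane orthogonal to any unit eigenvector $\mathbf{e}_r$ of $A$ with positive eigenvalue. This uniqueness is where I expect the main difficulty to lie; I would handle it through a Monneau-type monotonicity formula for $\int_{\partial B_r}(u-p)^2$, which is nondecreasing in $r$ and, combined with the optimal $r^2$ growth, forces $u-p=o(r^2)$ in $L^2$, hence uniqueness of the polynomial blow-up. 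Quantifying the decay then yields the $o(r)$ thickness in the conclusion.
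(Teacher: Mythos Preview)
The paper does not give its own proof of this theorem; it is attributed to Caffarelli \cite{C77} and stated without argument. Your outline---blow-up compactness, classification of global solutions via convexity (ACF applied to $(\partial_e u_0)_\pm$), a density dichotomy on the contact set, and flatness-implies-$C^{1,\alpha}$ plus Kinderlehrer--Nirenberg bootstrap for case (i)---is the standard route and is correct as a sketch of Caffarelli's proof.

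Where you go astray is in case (ii). The statement allows $\mathbf{e}_r$ to depend on $r$, and the paper stresses this immediately after the theorem: ``in the statement the vector $\mathbf{e}_r$ may depend on $r$. Also, the quantity $o(r)$ comes from a compactness argument.'' Thus \emph{uniqueness of the singular blow-up is not needed} for (ii) as written. Once you know every subsequential blow-up at a singular point lies in $\mathcal P$, the conclusion follows by contradiction: if for some $\varepsilon>0$ and some sequence $r_k\to 0$ no hyperplane contains $\partial\{u>0\}\cap B_{r_k}(x_0)$ within thickness $\varepsilon r_k$, pass to a further subsequence so that $u_{x_0,r_k}\to p\in\mathcal P$; nondegeneracy (Proposition~\ref{prop:non deg}) forces Hausdorff convergence of the rescaled free boundaries to $\{p=0\}\subset\ker A$, which lies in a hyperplane, a contradiction.

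Your plan to invoke Monneau's monotonicity formula to get uniqueness of the polynomial blow-up is exactly what the paper does---but \emph{afterwards}, as the separate Theorem~\ref{thm:uniq blow 2} via Lemma~\ref{lem:Mon}. In the paper's logical structure that is a strict refinement of (ii), not part of the dichotomy itself. So your proposal is not wrong, but it conflates two distinct results and misplaces the main difficulty: the hard analytic core of Theorem~\ref{thm:dico} is the classification of global solutions (convexity), not the uniqueness of blow-ups at singular points.
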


\begin{figure}[ht]
\includegraphics[width=0.31\textwidth]{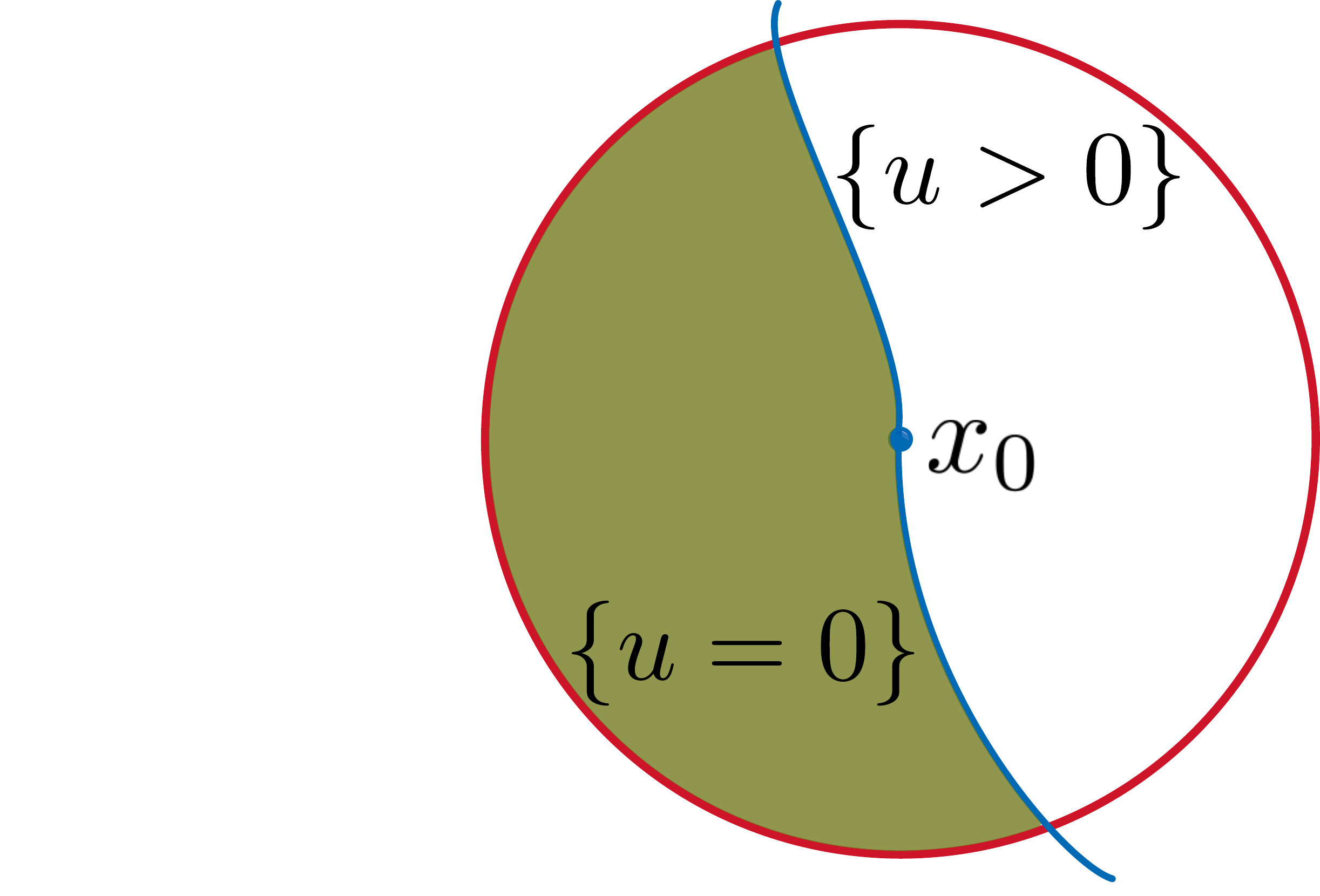}
\hspace{0.8cm}
\includegraphics[width=0.33\textwidth]{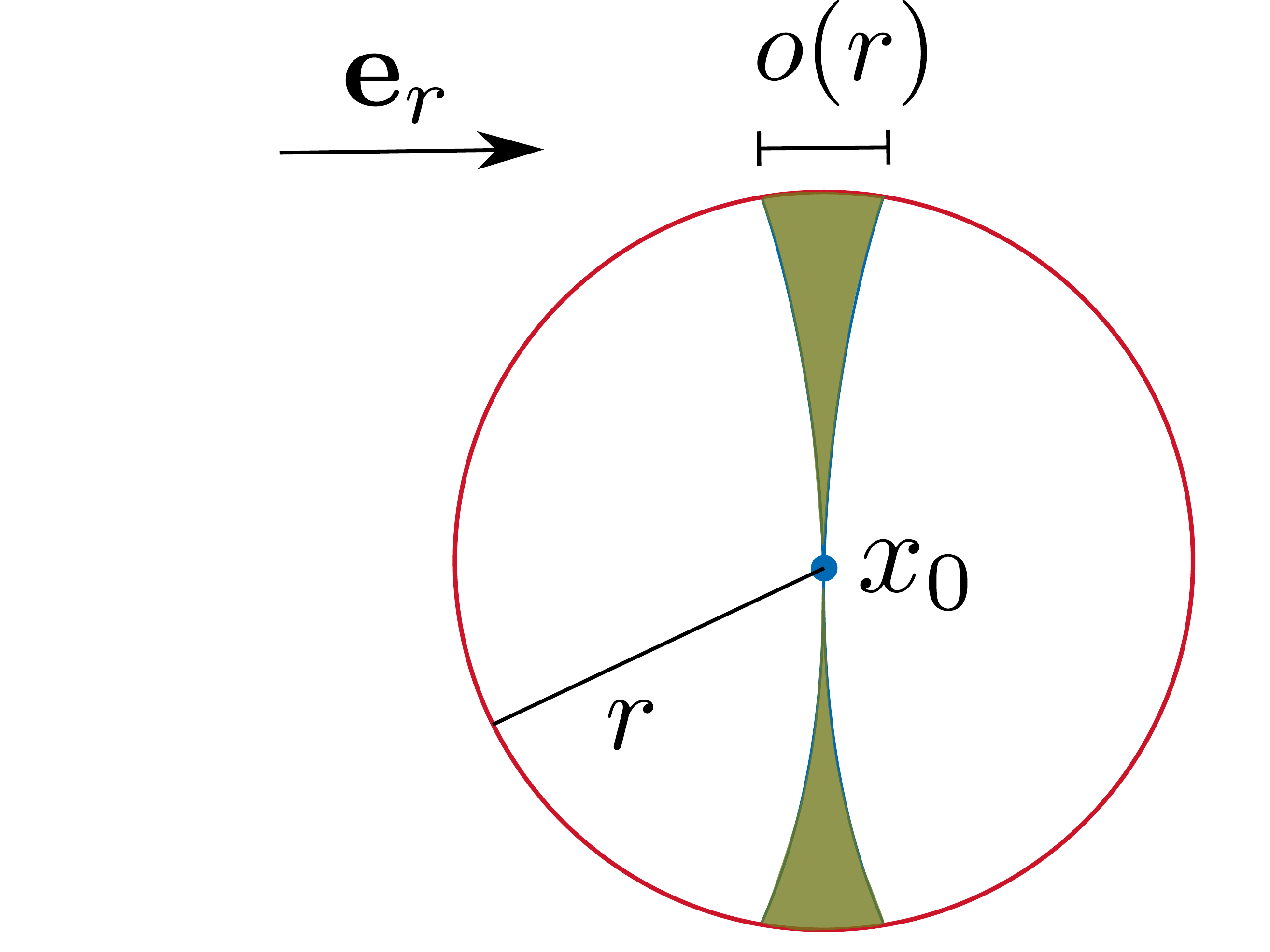}
\caption{A regular (left) and a singular (right) free boundary point.}
\label{Fig-reg-sing}
\end{figure}

Theorem \ref{thm:dico} states that a free boundary point can be either regular or singular. Also, if it is regular then the free boundary is smooth in a neighborhood and all points nearby are regular as well.
From this one deduces that the convergence in Definition \ref{def:reg pt} holds without the need of taking a subsequence of radii.

While Theorem \ref{thm:dico}(i) gives a  complete answer on the structure of regular points, Theorem \ref{thm:dico}(ii) is still not conclusive. Indeed,  in the statement the vector $\mathbf{e}_r$ may depend on $r$. Also, the quantity $o(r)$ comes from a compactness argument, so it is not quantified.

Hence, from now on we shall focus on the study of singular points.
To simplify the notation, we  denote
$$\Sigma:=\{\text{singular points}\}\subset \partial\{u>0\}.$$
Note that, since the set of regular points is relatively open inside the free boundary (see Theorem \ref{thm:dico}(i)), it follows that $\Sigma$ is a closed set.

\subsection{Uniqueness of blow-up at singular points}

As observed in the previous section, a priori the vector $\mathbf{e}_r$ appearing in the statement of
 Theorem \ref{thm:dico}(ii)
may depend on $r$. This fact is essentially related to the question of whether the convergence in Definition \ref{def:sing pt} holds up to  subsequences or not: indeed, if one could prove that the convergence to a polynomial $p$ holds without passing to a subsequence, then one could easily deduce that 
$$\partial\{u>0\}\cap B_{r}(x_0)\subset \bigl\{x\,:\, {\rm dist}(x-x_0,\{p=0\})\leq o(r)\bigr\}.$$

The complete answer to this questions has  been given by Caffarelli in \cite{C98}, after important previous results in the two dimensional case \cite{CR77,Sak91,Sak93}.

From now on, we use the notation 
\begin{multline*}
\mathcal P := \bigg\{ p(x) = { \frac 1 2} \langle  Ax,x\rangle\  :\ \\   A\in \R^{n\times n}\mbox{ symmetric nonnegative definite}, \mbox{ ${\rm tr}\, A=1$}
\bigg\}
.
\end{multline*}

\begin{theorem}
\label{thm:uniq blow 2}
Let $x_0\in \Sigma$. Then
there exists $p_{*,x_0} \in \mathcal P$ such that
$$
\lim_{r\to 0}\frac{u(x_0+rx)}{r^2}= p_{*,x_0}(x).
$$
In addition, the map
$$
\Sigma\ni x_0\mapsto p_{*,x_0}
$$
is locally uniformly continuous.
\end{theorem}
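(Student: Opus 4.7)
The strategy introduces two quasi-Lyapunov quantities: Weiss's monotonicity formula to force blow-ups to be $2$-homogeneous, and Monneau's monotonicity formula to force uniqueness of the blow-up polynomial at singular points. First, recall Weiss's formula: for $x_0\in\partial\{u>0\}$,
$$
W(r,x_0):=\frac{1}{r^{n+2}}\int_{B_r(x_0)}\Bigl(\tfrac12|\nabla u|^2+u\Bigr)\,dx-\frac{1}{r^{n+3}}\int_{\partial B_r(x_0)}u^2\,d\mathcal{H}^{n-1}
$$
is nondecreasing in $r$, with derivative vanishing precisely on profiles $2$-homogeneous about $x_0$. Combined with the $C^{1,1}$-bounds of Theorem~\ref{thm:C11}, this forces every blow-up limit at a free boundary point to be $2$-homogeneous; at $x_0\in\Sigma$, Definition~\ref{def:sing pt} then identifies the limit as an element of $\mathcal{P}$.

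Next I introduce Monneau's quantity: for $x_0\in\Sigma$ and $p\in\mathcal{P}$, set
$$
M(r,x_0,p):=\frac{1}{r^{n+3}}\int_{\partial B_r(x_0)}\bigl(u(x)-p(x-x_0)\bigr)^2\,d\mathcal{H}^{n-1}.
$$
The claim is that $r\mapsto M(r,x_0,p)$ is nondecreasing. Writing $w:=u-p(\cdot-x_0)$, a direct computation (combining integration by parts on spherical shells with the $2$-homogeneity of $p$) reduces matters to the sign of $\int_{B_r(x_0)}w\,\Delta w$; since $\Delta w=-\chi_{\{u=0\}}$ and $w=-p\leq 0$ on $\{u=0\}$, one has $w\,\Delta w\geq 0$ pointwise, giving the desired monotonicity. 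A change of variables yields $M(r,x_0,p)=\int_{\partial B_1}(u_{x_0,r}-p)^2\,d\mathcal{H}^{n-1}$. Uniqueness of the blow-up then follows: if $p_1,p_2\in\mathcal{P}$ are two subsequential limits along $r_k\to 0^+$ and $r_k'\to 0^+$, then $M(r_k,x_0,p_1)\to 0$, monotonicity forces $\lim_{r\to 0^+}M(r,x_0,p_1)=0$, and evaluating along $r_k'$ gives $\int_{\partial B_1}(p_2-p_1)^2=0$, hence $p_1\equiv p_2$ by $2$-homogeneity. Denote the common limit by $p_{*,x_0}$.

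For the local uniform continuity of $x_0\mapsto p_{*,x_0}$ on a compact set $K\subset\Sigma$, a contradiction-and-compactness argument (using that $\mathcal{P}$ lives in a bounded finite-dimensional set) first upgrades the pointwise convergence to $\sup_{x_0\in K}M(r,x_0,p_{*,x_0})\to 0$ as $r\to 0^+$. Given $x_0,x_0'\in K$ with $h:=x_0'-x_0$ small, fix a scale $r\gg|h|$: the Monneau estimate at $x_0$ and at $x_0'$ give, in $L^2(\mathbb{S}^{n-1})$-sense, $u(x_0+r\xi)\approx r^2 p_{*,x_0}(\xi)$ and $u(x_0'+r\xi)\approx r^2 p_{*,x_0'}(\xi)$. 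Substituting $x_0'+r\xi=x_0+(r\xi+h)$ and using $2$-homogeneity of $p_{*,x_0}$ one obtains
$$
\int_{\mathbb{S}^{n-1}}\bigl(p_{*,x_0'}(\xi)-p_{*,x_0}(\xi+h/r)\bigr)^2\,d\mathcal{H}^{n-1}\leq\varepsilon(r),
$$
with $\varepsilon(r)\to 0$ uniformly on $K$. Since $p_{*,x_0}$ is a polynomial, $\eta\mapsto p_{*,x_0}(\cdot+\eta)$ is continuous; choosing $r=r(|h|)\to 0$ with $|h|/r(|h|)\to 0$ produces the claimed modulus of continuity. The main technical difficulty is Monneau's formula itself: the rest of the argument is a compactness and bookkeeping exercise, but the sign computation behind Monneau is delicate and is precisely the reason the uniqueness theorem is restricted to singular points, where the comparison polynomial lies in $\mathcal{P}$.
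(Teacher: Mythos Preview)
Your uniqueness argument via Monneau's monotonicity is correct and essentially matches the paper's. The sketch of Monneau's formula is also fine.

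The continuity argument, however, contains a circularity. You claim that a contradiction--compactness argument upgrades the pointwise convergence $M(r,x_0,p_{*,x_0})\to 0$ to a uniform one on compact $K\subset\Sigma$. Spell this out: if it failed, there would be $\varepsilon>0$, $r_j\to 0$, $x_j\in K$ with $M(r_j,x_j,p_{*,x_j})\geq\varepsilon$; by compactness $x_j\to\bar x$ and $p_{*,x_j}\to\bar p$ in $\mathcal P$; by monotonicity $M(\rho,x_j,p_{*,x_j})\geq\varepsilon$ for every fixed $\rho>0$ and $j$ large, hence $M(\rho,\bar x,\bar p)\geq\varepsilon$; letting $\rho\to 0$ gives $\int_{\partial B_1}(p_{*,\bar x}-\bar p)^2\geq\varepsilon$. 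This is a contradiction \emph{only if} $\bar p=p_{*,\bar x}$, which is precisely the continuity of $x_0\mapsto p_{*,x_0}$ you are trying to prove. Without that, the argument stalls, and the subsequent scale-matching step (which relies on the uniform smallness of $M(r,\cdot,p_{*,\cdot})$) is unjustified.

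The paper's proof sidesteps this with a single clean trick: to show continuity at $0\in\Sigma$, apply Monneau at the nearby point $x_k$ \emph{with the polynomial $p_{*,0}$} (not $p_{*,x_k}$). Monotonicity gives
\[
\int_{\partial B_1}|p_{*,x_k}-p_{*,0}|^2=\lim_{r\to 0}M(r,x_k,p_{*,0})\leq M(r_\varepsilon,x_k,p_{*,0}),
\]
and the right-hand side converges, as $x_k\to 0$, to $M(r_\varepsilon,0,p_{*,0})\leq\varepsilon$ by continuity of $u$ alone. No uniform convergence and no compactness in $\mathcal P$ are needed; local uniform continuity then follows from local compactness of $\Sigma$. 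The moral: Monneau's lemma holds for \emph{any} $p\in\mathcal P$, so you are free to plug in the blow-up at the limit point rather than at the varying point.
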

We present here a proof of this result given few years later by Monneau \cite{M03}. To this aim, we first recall the
so-called Monneau's monotonicity formula, whose proof relies on a previous monotonicity formula obtained by Weiss in \cite{W99}.
\begin{lemma}\label{lem:Mon}
Let $0 \in \Sigma$, $p \in \mathcal P$, and define
$$
M(r,u,p):=\frac{1}{r^{n+3}}
 \int_{\partial B_r} (u-p)^2.
$$
Then
$$
\frac{d}{dr}M(r,u,p)\geq 0.
$$
\end{lemma}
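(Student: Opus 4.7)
The plan is to rescale to the unit sphere, differentiate $M$ in $r$, and then use Weiss's monotonicity formula together with a universality property of the Weiss energy on $\mathcal P$ to express the derivative as a sum of two manifestly nonnegative terms. As a first step, I would set $v_r(x):=u(rx)/r^2$ and observe that, since any $p\in\mathcal P$ is $2$-homogeneous, $p(rx)/r^2=p(x)$; a change of variables then gives
$$
M(r,u,p)=\int_{\partial B_1}(v_r-p)^2\,d\sigma.
$$
Writing $w_r:=v_r-p$ and using Euler's identity $\xi\cdot\nabla p=2p$, one gets $\partial_r w_r=\tfrac{1}{r}(\xi\cdot\nabla w_r-2w_r)$ on the unit sphere, and therefore
$$
\frac{d}{dr}M(r,u,p)=\frac{2}{r}\int_{\partial B_1}w_r\,\partial_\nu w_r\,d\sigma-\frac{4}{r}\int_{\partial B_1}w_r^2\,d\sigma.
$$

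Next I would apply Green's identity on $B_1$ to the first integral and reorganize algebraically. Using $\Delta p=1$ and $\partial_\nu p=2p$ on $\partial B_1$, together with an additional integration by parts to absorb the resulting cross terms, one identifies
$$
\frac{d}{dr}M(r,u,p)=\frac{4}{r}\bigl[W(r,u)-W(r,p)\bigr]+\frac{2}{r}\int_{B_1}w_r\,\Delta w_r\,dx,
$$
where $W(r,v):=r^{-n-2}\int_{B_r}(|\nabla v|^2/2+v)-r^{-n-3}\int_{\partial B_r}v^2$ is the Weiss energy. The volume integral is nonnegative: since $\Delta v_r=\chi_{\{v_r>0\}}$ and $\Delta p=1$, we have $\Delta w_r=-\chi_{\{v_r=0\}}$, and on the set $\{v_r=0\}$ it holds $w_r=-p\le 0$, so $w_r\,\Delta w_r=p\,\chi_{\{v_r=0\}}\ge 0$.

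The crux is to show $W(r,u)\ge W(r,p)$. Two observations combine: first, because $p$ is $2$-homogeneous, the rescaling $p_r$ equals $p$ identically, so $W(r,p)$ is constant in $r$; moreover, a short calculation using Green's identity and $\operatorname{tr} A=1$ yields the closed form $W(r,p)=|B_1|/[4(n+2)]$, which is the same value for every $p\in\mathcal P$, depending only on the dimension $n$. Second, the hypothesis $0\in\Sigma$ gives, by definition of a singular point, a blow-up sequence $v_{r_k}\to p_*$ with $p_*\in\mathcal P$, and by Weiss's monotonicity formula $W(r,u)$ is nondecreasing in $r$ with $\lim_{r\to 0^+}W(r,u)=W(r,p_*)=|B_1|/[4(n+2)]$. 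Combining, $W(r,u)\ge W(r,p)$ for every $r>0$ and every $p\in\mathcal P$, and the desired monotonicity of $M$ follows.

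The principal difficulty I anticipate is precisely this universality of $W$ on $\mathcal P$: the singular-point hypothesis only directly pins down $W(0^+,u)$ through a single blow-up limit $p_*$, so without the nontrivial fact that $W$ assigns the same value to every element of $\mathcal P$, the comparison $W(r,u)\ge W(r,p)$ for a generic $p$ would be unavailable, and in fact the formula would fail at regular free boundary points (where the blow-up lies outside $\mathcal P$ and has strictly smaller Weiss energy). Once this universality is in hand, the rest is bookkeeping using Green's identity and Weiss's formula.
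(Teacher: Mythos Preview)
Your proposal is correct and follows precisely the standard route that the paper alludes to when it says the lemma's ``proof relies on a previous monotonicity formula obtained by Weiss in \cite{W99}''; the paper itself does not give a proof of the lemma. Your derivative identity
\[
\frac{d}{dr}M(r,u,p)=\frac{4}{r}\bigl[W(r,u)-W(r,p)\bigr]+\frac{2}{r}\int_{B_1}w_r\,\Delta w_r
\]
is exactly Monneau's computation, and both sign arguments you give---the universality $W(1,p)=\tfrac{1}{2}\int_{B_1}p=\tfrac{|B_1|}{4(n+2)}$ for every $p\in\mathcal P$ (which follows from $\int_{B_1}|\nabla p|^2=2\int_{\partial B_1}p^2-\int_{B_1}p$ and ${\rm tr}\,A=1$), together with Weiss monotonicity and the singular blow-up, and the pointwise inequality $w_r\Delta w_r=p\,\chi_{\{v_r=0\}}\ge 0$---are the standard ones. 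Your closing observation that the argument would fail at regular points because half-space solutions have strictly smaller Weiss energy is also correct and nicely isolates where the hypothesis $0\in\Sigma$ enters.
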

Using this lemma, the uniqueness and the continuity of the blow-up at singular points follows rather easily.
\begin{proof}[Proof of Theorem \ref{thm:uniq blow 2}]
We first prove the existence of the limit.

Assume with no loss of generality that $x_0=0$,
and set $$u_r(x):=r^{-2}u(rx).$$
With this notation, noticing that $r^{-2}p(rx)=p(x)$, it is follows by a change of variables that
\begin{equation}
\label{eq:change M}
M(r,u,p)=
\frac{1}{r^{n+3}}
 \int_{\partial B_r} (u-p)^2
 =
 \int_{\partial B_1} (u_r-p)^2.
\end{equation}
Now, let 
$p_1$ and $p_2$ be two different limits obtained along two sequences $r_{k,1}$ and $r_{k,2}$ both converging to zero. Up to taking a subsequence of $r_{k,2}$ and relabeling the indices, we can assume that $r_{k,2}\leq r_{k,1}$ for all $k$.
Thus, thanks to Lemma \ref{lem:Mon}
and \eqref{eq:change M}, we have
$$\int_{B_1}(u_{r_{k,1}}-p_1)^2=M(r_{k,1},u,p_1)\geq M(r_{k,2},u,p_1)=\int_{B_1}(u_{r_{k,2}}-p_1)^2, 
$$
and letting $k\to \infty$ we obtain
$$
0=\lim_{k\to \infty} \int_{B_1}(u_{r_{k,1}}-p_1)^2\geq \lim_{k\to \infty}\int_{B_1}(u_{r_{k,2}}-p_1)^2 =\int_{B_1}(p_2-p_1)^2.
$$
This proves that there is a unique possible limit for the functions $u_r$ as $r\to 0$, which implies that the limit exists.
From now on, given a singular point $x_0$, we shall denote this limit by $p_{*,x_0}$.

We now prove the continuity of the map $x_0 \mapsto p_{*,x_0}$ at $0 \in \Sigma$.
Fix $\ep>0$, and consider a sequence $x_k\in \Sigma$ with $x_k\to 0$.
Since $u_r \to p_{*,0}$ as $r\to 0$, there exists a small radius $r_\ep>0$ such that
\begin{equation}
\label{eq:eps}
\int_{\partial B_1}\biggl|\frac{u(r_\ep x)}{r_\ep^2}-p_{*,0}(x)\biggr|^2 \leq \ep.
\end{equation}
Then, applying Lemma \ref{lem:Mon} at $x_k$ with $p=p_{*,0}$, we deduce that
\begin{align*}
\int_{\partial B_1}|p_{x_k,*} - p_{*,0}|^2 &=\lim_{r\to 0}
 \int_{\partial B_1}\biggl|\frac{u(x_k+r x)}{r^2} - p_{*,0}(x)\bigg|^2\\
 & \leq 
  \int_{\partial B_1}\biggl|\frac{u(x_k+r_\ep x)}{r_\ep^2} - p_{*,0}(x)\bigg|^2.
\end{align*}
Hence, letting $k \to \infty$ and recalling \eqref{eq:eps} we obtain
\begin{align*}
\limsup_{k\to \infty}\int_{\partial B_1}
|p_{x_k,*} - p_{*,0}|^2\leq \lim_{k\to \infty} \int_{\partial B_1}\biggl|\frac{u(x_k+r_\ep x)}{r_\ep^2} - p_{*,0}(x)\bigg|^2
 \le	 \ep.
\end{align*}
Since $\ep>0$ is arbitrary, this proves the continuity at $0$.

Because $\Sigma$ is locally compact (recall that $\Sigma$ is closed), this actually implies that the map
$$
\Sigma\ni x_0\mapsto p_{*,x_0}
$$
is locally uniformly continuous.
\end{proof}

\subsection{Stratification and $C^1$ regularity of the singular set}
With Theorem \ref{thm:uniq blow 2} at hand, we can now investigate the regularity of $\Sigma$.
Note that singular points may look very different, depending on the dimension of the set $\{p_{*,x_0}=0\}$, see Figures 
\ref{Fig-blow2bis} and
\ref{Fig-blow2bis3}.
This suggests to stratify the set of singular points according to this dimension.
More precisely,
given
$x_0\in \Sigma$ we set
$$
k_{x_0}:={\rm dim}({\rm ker} \,D^2p_{*,x_0})
={\rm dim}(\{p_{*,x_0}=0\}).
$$
Then, given $m\in \{0,\ldots,n-1\}$, we define
$$
\Sigma_m:=\{x_0\in \Sigma\,:\,k_{x_0}=m\}.
$$
Note that, with this definition, the point in Figure \ref{Fig-blow2bis} belongs to $\Sigma_2$, while the point in Figure \ref{Fig-blow2bis3} belongs to $\Sigma_1$.
Hence one may expect that $\Sigma_m$ should correspond to the $m$-dimensional part of $\Sigma$, see Figure \ref{Fig-contact}.
\begin{figure}[ht]
\includegraphics[width=0.7\textwidth]{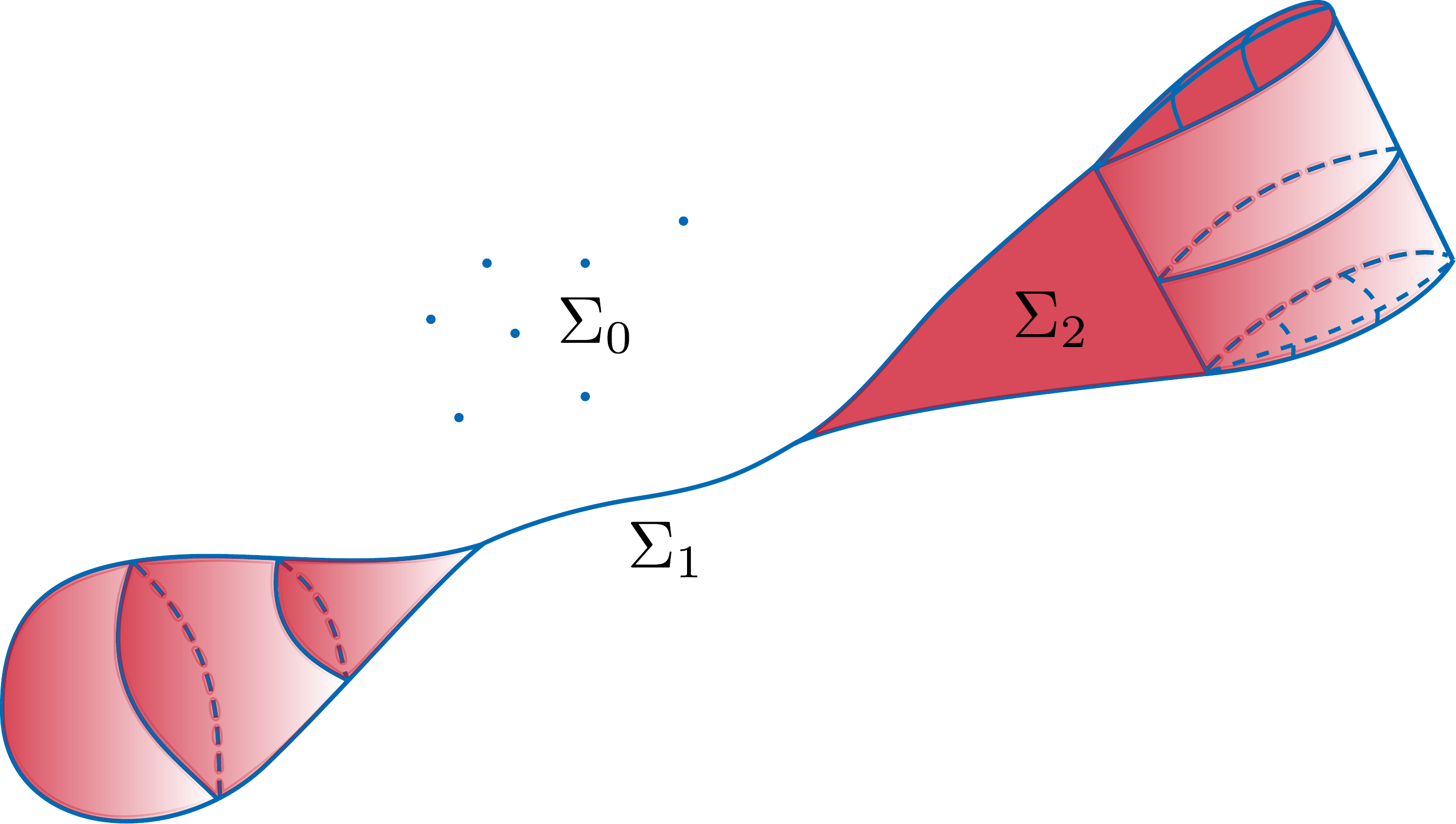}
\caption{A possible example of contact set in 3 dimensions.}
\label{Fig-contact}
\end{figure}

This intuition is confirmed by the following result of Caffarelli \cite{C98}:
\begin{theorem}
\label{thm:C1}
For any  $m\in \{0,\ldots,n-1\}$,
$\Sigma_m$ is locally contained in 
a $m$-dimensional manifold of class $C^1$.
\end{theorem}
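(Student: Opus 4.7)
The plan is to upgrade the pointwise blow-up from Theorem \ref{thm:uniq blow 2} into a Whitney $C^2$ jet on the closed set $\Sigma$, extend it to a global $C^2$ function $F$ on $\R^n$, and then realize $\Sigma_m$ locally as a piece of the zero set of the $C^1$ map $\nabla F$, whose differential at $x_0\in \Sigma_m$ has rank $n-m$. The implicit function theorem then cuts out an $m$-dimensional $C^1$ manifold containing a neighborhood of $x_0$ in $\Sigma_m$.

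First I would promote Theorem \ref{thm:uniq blow 2} to a locally uniform second-order expansion: for every compact $K\subset \Sigma$ and every $\eps>0$ there exists $\delta>0$ such that
\[
\bigl|u(x_0+y)-p_{*,x_0}(y)\bigr|\leq \eps\,|y|^2\qquad\text{for all }x_0\in K,\ |y|\leq \delta.
\]
Uniformity in $x_0$ follows by combining Monneau's formula (Lemma \ref{lem:Mon}, applied at each point of $\Sigma$) with the continuity $x_0\mapsto p_{*,x_0}$: at any $\bar x\in K$ the quantity $M(r,u(\bar x+\cdot),p_{*,\bar x})$ is non-decreasing in $r$ and tends to $0$ as $r\to 0$, so it is small on a whole interval $(0,\delta]$; continuity then transports this smallness to all nearby $x_0\in K$, giving uniform $L^2$ closeness of $u_{x_0,r}$ to $p_{*,x_0}$ on $\partial B_1$. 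Interpolation with the uniform $C^{1,1}$ bound from Theorem \ref{thm:C11} converts this into the pointwise uniform estimate above.

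Next I would verify the Whitney compatibility conditions for the $C^2$ jet $(f,g,H)\equiv(0,0,A_{x_0})$ on $\Sigma$, where $A_{x_0}:=D^2p_{*,x_0}$. Since $u\equiv 0$ on $\Sigma$, the zeroth-order condition $p_{*,x_0}(y-x_0)=o(|y-x_0|^2)$ for $x_0,y\in \Sigma$ is immediate from the uniform expansion. For the first-order condition $|A_{x_0}(y-x_0)|=o(|y-x_0|)$, use $A_{x_0}\geq 0$: the quadratic bound $\langle A_{x_0}(y-x_0),y-x_0\rangle=o(|y-x_0|^2)$ yields $|A_{x_0}^{1/2}(y-x_0)|=o(|y-x_0|)$, whence $|A_{x_0}(y-x_0)|\leq \|A_{x_0}^{1/2}\|\,|A_{x_0}^{1/2}(y-x_0)|=o(|y-x_0|)$. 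Continuity of $x_0\mapsto A_{x_0}$ is exactly Theorem \ref{thm:uniq blow 2}. Whitney's extension theorem then produces $F\in C^2(\R^n)$ with $F|_\Sigma=0$, $\nabla F|_\Sigma=0$, and $D^2F(x_0)=A_{x_0}$ for every $x_0\in \Sigma$.

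Finally, fix $x_0\in \Sigma_m$, set $V:=\ker A_{x_0}$ (so $\dim V=m$), and let $\pi:\R^n\to V^\perp$ be the orthogonal projection. The map $G:=\pi\circ \nabla F$ is $C^1$, $G(x_0)=0$, and $DG(x_0)=\pi\circ A_{x_0}:\R^n\to V^\perp$ is surjective because $A_{x_0}$ maps $V^\perp$ isomorphically onto itself. By the implicit function theorem, $G^{-1}(0)\cap U$ is an $m$-dimensional $C^1$ submanifold for some neighborhood $U$ of $x_0$, and the inclusions $\Sigma_m\cap U\subset \Sigma\cap U\subset \{\nabla F=0\}\cap U\subset G^{-1}(0)\cap U$ provide the desired local $C^1$ $m$-manifold. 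The main obstacle is the first step: Theorem \ref{thm:uniq blow 2} is pointwise in $x_0$, while Whitney demands a modulus of continuity uniform in both the base point and the evaluation point inside $\Sigma$; the monotonicity of $M(r,u,p)$ together with the continuity $x_0\mapsto p_{*,x_0}$ is precisely what enables this uniformization.
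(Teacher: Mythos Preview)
Your proposal is correct and follows essentially the same strategy as the paper's proof: verify a uniform second-order expansion at singular points, feed the jet $(0,0,A_{x_0})$ into Whitney's extension theorem, and apply the implicit function theorem to the gradient of the extension. The only cosmetic difference is that the paper extends directly to a $C^1$ map $F:\R^n\to\R^n$ with $F|_{\Sigma_m}=0$ and $\nabla F(x_0)=D^2p_{*,x_0}$, whereas you extend to a scalar $C^2$ function and then pass to its gradient; the Whitney compatibility checks are identical in either formulation, and your use of $A_{x_0}\ge 0$ to pass from $\langle A_{x_0}(y-x_0),y-x_0\rangle=o(|y-x_0|^2)$ to $|A_{x_0}(y-x_0)|=o(|y-x_0|)$ is exactly the step the paper leaves implicit.
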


\begin{proof}[Idea of the proof]
Recalling
\eqref{eq:zero grad}, we have
 $u|_{\Sigma_m}=\nabla u|_{\Sigma_m}\equiv 0$.
Also, thanks to Theorem \ref{thm:uniq blow 2},
$$
u(x_0+y)=p_{*,x_0}(y)+o(|y|^2).
$$
Hence, at least formally, $p_{*,x_0}$ corresponds to the second order term in the Taylor expansion of $u$,
namely ``$p_{*,x_0}(y)=\frac12 \langle D^2u(x_0)y,y\rangle$'', or equivalently
$$
\text{``$D^2 p_{*,x_0}=D^2u(x_0)$''.}
$$
Since the map
$$\Sigma\ni x_0\mapsto p_{*,x_0}(y)=\frac12 \langle D^2p_{*,x_0}y,y\rangle$$ is continuous, we deduce that
$$
\Sigma\ni x_0\mapsto D^2 p_{*,x_0}\in \R^{n\times n}
$$
is continuous as well.

This allows us to apply Whitney's extension theorem to find a map $F:\R^n\to \R^n$ of class $C^1$ such that
$$
F(x_0)=\nabla u(x_0)=0 
\qquad \text{and}\qquad \nabla F(x_0)=D^2p_{*,x_0}\qquad \forall\,x_0\in \Sigma_m.
$$
Noticing that, by the definition of $\Sigma_m$,
$$
{\rm dim}({\rm ker }\,\nabla F(x_0))={\rm dim}({\rm ker }\,D^2p_{*,x_0})=m\qquad \text{ on $\Sigma_m$},
$$
it follows by the Implicit Function Theorem that
$$
{\Sigma_m}=\{F=0\}\cap {\Sigma_m}
$$
is locally contained in 
a $C^1$ $m$-dimensional manifold,
 as desired.
%
%
%
\end{proof}

\begin{remark}
The proof above shows that the estimate
$$
\|u(x_0+\cdot)-p_{*,x_0}\|_{L^\infty(B_r)}= o(r^2),
$$
with an error $o(r^2)$ independent of $x_0$, implies that 
$\Sigma_m$ is locally contained in a $C^1$ $m$-dimensional manifold. 

More in general, if one could prove that
\begin{equation}
\label{eq:C2a u}
\|u(x_0+\cdot)-p_{*,x_0}\|_{L^\infty(B_r)}\leq C\,r^{2+\alpha}
\end{equation}
for some constant $C$ independent of $x_0$, then by applying Whitney's extension theorem in H\"older spaces one would conclude that $\Sigma_m$ is contained in a $m$-dimensional manifold of class $C^{1,\alpha}$.
\end{remark}

\begin{remark}
The fact that $\Sigma_m$ is only contained in a manifold (and does not necessarily coincide with it) is optimal: already for $n=2$, one can build examples where $\Sigma_1$ coincides with a Cantor set contained in a line \cite{Sch76}.
\end{remark}

\subsection{Recent developments}
\label{sect:recent}

In 1999,  Weiss proved a monotonicity formula that allowed him to obtain the following result \cite{W99}:
\begin{theorem}
\label{thm:Weiss}
Let $n=2$. Then there exist $C,\alpha>0$ such that
$$
\|u(x_0+\cdot)-p_{*,x_0}\|_{L^\infty(B_r)}
\leq
C \,
r^{2+\alpha}\qquad \forall\,x_0 \in \Sigma.
$$
In particular $
\Sigma_1$ is  locally contained in a $C^{1,\alpha}$ curve.
\end{theorem}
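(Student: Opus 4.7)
The plan is to follow Weiss's strategy \cite{W99}: establish a \emph{quantitative} energy decay $W(r,u,x_0) - W_0 \leq C r^{2\alpha}$, uniform on $\Sigma$, via an epiperimetric inequality specific to $n=2$, and then transfer this integral decay to the claimed pointwise $L^\infty$ rate. Once such a uniform pointwise rate is in hand, the Remark following Theorem \ref{thm:C1} yields the $C^{1,\alpha}$ containment of $\Sigma_1$.

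Concretely, I would introduce Weiss's energy
$$
W(r,u,x_0) := \frac{1}{r^{n+2}}\int_{B_r(x_0)} \bigl(|\nabla u|^2 + 2u\bigr) - \frac{2}{r^{n+3}}\int_{\partial B_r(x_0)} u^2,
$$
which satisfies $\frac{d}{dr}W(r,u,x_0) \geq 0$ by direct computation using $\Delta u = \chi_{\{u>0\}}$. For any $p \in \mathcal P$, $2$-homogeneity and $\Delta p = 1$ yield $W(r,p,0) \equiv \int_{B_1} p = \frac{|B_1|}{2(n+2)}$, a dimensional constant $W_0$ \emph{independent of the particular} $p$. Combined with Theorem \ref{thm:uniq blow 2} and the local uniform continuity $x_0\mapsto p_{*,x_0}$, this gives $\lim_{r\to 0}W(r,u,x_0) = W_0$ locally uniformly on $\Sigma$.

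The heart of the argument, and its main obstacle, is an \emph{epiperimetric inequality at singular points}: there exist $\delta, \kappa > 0$ such that whenever $v : B_1 \to \R_{\geq 0}$ is the $2$-homogeneous extension of some trace $g \in H^1(\partial B_1)$ with $\|v-p\|_{H^1(B_1)} \leq \delta$ for some $p \in \mathcal P$, one can exhibit a nonnegative competitor $w$ with $w|_{\partial B_1} = g$ satisfying
$$
\int_{B_1}\bigl(|\nabla w|^2+2w\bigr)-2\!\int_{\partial B_1}w^2 - W_0 \leq (1-\kappa)\!\left(\int_{B_1}\bigl(|\nabla v|^2+2v\bigr)-2\!\int_{\partial B_1}v^2 - W_0\right)\!.
$$
In $n=2$ this is tractable because, after a rotation, $\mathcal P$ reduces to the one-parameter family $p_\lambda(x) = \tfrac12\bigl(\lambda x_1^2+(1-\lambda)x_2^2\bigr)$, $\lambda \in [0,1]$, so the admissible traces on $\partial B_1$ form a finite-dimensional family modulo symmetries, and $w$ can be built by an explicit interpolation between $v$ and the element of $\mathcal P$ closest in $L^2(\partial B_1)$ to $g$. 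The quantitative gain $\kappa$ should follow either from a compactness/linearization argument or, after decomposing on $S^1$ in Fourier modes, from a direct eigenvalue computation yielding a spectral gap between the homogeneity-$2$ subspace and its orthogonal complement. In higher dimension this step fails with a polynomial rate, which is why the theorem is restricted to $n=2$.

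Finally, I would apply the epiperimetric inequality with $v$ the $2$-homogeneous extension of $u_r|_{\partial B_1}$, where $u_r := r^{-2}u(x_0+r\cdot)$. Since $u$ is the minimizer of $\int(|\nabla v|^2+2v)$ among nonnegative competitors with its own boundary values, the minimality of $u_r$ paired with the competitor $w$ and the identity $W(1,v) - W(1,u_r) = c\,r\,\frac{d}{dr}W(r,u,x_0)$ (itself coming from Weiss's differentiation formula) yields
$$
r\,\frac{d}{dr}\bigl(W(r,u,x_0) - W_0\bigr) \geq c\,\kappa\,\bigl(W(r,u,x_0) - W_0\bigr),
$$
which integrates to $W(r,u,x_0)-W_0 \leq C r^{2\alpha}$ uniformly in $x_0 \in \Sigma$. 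A standard coupling of Weiss's and Monneau's monotonicity formulae (Lemma \ref{lem:Mon}) converts this energy gap into $\int_{\partial B_1}(u_r - p_{*,x_0})^2 \leq C r^{2\alpha}$, and the $C^{1,1}$ bound of Theorem \ref{thm:C11} upgrades the $L^2$ estimate on spheres to the pointwise $\|u(x_0+\cdot) - p_{*,x_0}\|_{L^\infty(B_r)} \leq C r^{2+\alpha}$, with $C$ independent of $x_0$. The Remark following Theorem \ref{thm:C1} then produces the $C^{1,\alpha}$ curve containing $\Sigma_1$.
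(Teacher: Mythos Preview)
The paper does not actually prove Theorem~\ref{thm:Weiss}: it is stated as a result of Weiss \cite{W99} and then used as a point of comparison for the later developments (Theorems~\ref{thm:CSV} and~\ref{thm:main}). So there is no ``paper's own proof'' to compare against.

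That said, your outline is a faithful sketch of Weiss's original argument in \cite{W99}: the Weiss energy $W(r,u,x_0)$, the epiperimetric inequality near $\mathcal P$, the resulting differential inequality for $W-W_0$, the transfer to $M(r,u,p_{*,x_0})$ via Lemma~\ref{lem:Mon}, and the upgrade to $L^\infty$ through Theorem~\ref{thm:C11} are exactly the ingredients. Your explanation of why $n=2$ is special---that $\mathcal P$ modulo rotations is one-dimensional, so the competitor $w$ can be built explicitly and the gain $\kappa$ extracted by a direct spectral computation on $S^1$---is the correct heuristic, and matches what the paper alludes to when it says Weiss's proof ``was restricted to two dimensions because of some delicate technical assumptions in some steps of the proof.'' One small caution: the relation you write as $W(1,v)-W(1,u_r)=c\,r\,\frac{d}{dr}W(r,u,x_0)$ is not literally an identity; the actual derivative formula reads $\frac{d}{dr}W(r,u,x_0)=\frac{2}{r}\int_{\partial B_1}(x\cdot\nabla u_r-2u_r)^2$, and the link to the epiperimetric competitor goes through minimality of $u_r$ plus the inequality $W(1,u_r)\le W(1,w)$, not an equality. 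This does not affect the structure of your argument.
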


Weiss' proof was restricted to two dimensions because of some delicate technical assumptions in some steps of the proof. Still, one could have hoped to extend his argument to higher dimensions. This was achieved by Colombo, Spolaor, and Velichkov \cite{CSV17}. There, the authors introduced a quantitative argument to avoid a compactness step in Weiss' proof. However, the price to pay for working in higher dimensions was that they could only get a logarithmic improvement in the convergence of $u$ to $p_{*,x_0}$:
\begin{theorem}
\label{thm:CSV}
Let $n\geq 3$. Then
exist dimensional constants $C,\epsilon>0$ such that
$$
\|u(x_0+\cdot)-p_{*,x_0}\|_{L^\infty(B_r)}
\leq
C \,r^2|\log(r)|^{-\epsilon}\qquad \forall\, x_0 \in \Sigma.
$$
In particular, for any $m \in \{0,\ldots,n-1\}$, $
\Sigma_m$  is  locally contained in a $C^{1,\log^\epsilon}$ $m$-dimensional manifold.
\end{theorem}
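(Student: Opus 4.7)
The plan is to follow Colombo--Spolaor--Velichkov~\cite{CSV17}, upgrading Weiss's monotonicity--epiperimetric argument used in the proof of Theorem~\ref{thm:Weiss} by replacing the compactness step with a quantitative \emph{log-epiperimetric} inequality valid in every dimension. Normalising $x_0=0$, the central object is the Weiss energy
$$W(r,u):=\frac{1}{r^{n+2}}\int_{B_r}\bigl(|\nabla u|^2+2u\bigr)\,dx-\frac{2}{r^{n+3}}\int_{\partial B_r}u^2\,d\sigma,$$
which by Weiss's monotonicity formula is nondecreasing in $r$, is constant in $r$ on every $p\in\mathcal P$, and by Theorem~\ref{thm:uniq blow 2} satisfies $\lim_{r\to 0^+}W(r,u)=W(1,p_{*,0})$. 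Setting $\Phi(r):=W(r,u)-W(1,p_{*,0})\geq 0$, the objective is to quantify $\Phi(r)\to 0$ and then transfer the decay to an $L^\infty$ estimate.

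The hard part, and the essential novelty over Weiss's two-dimensional argument, is to prove the log-epiperimetric inequality: there exist dimensional constants $\gamma,c_0>0$ such that for every nonnegative $\varphi\in H^1(\partial B_1)$ whose $2$-homogeneous extension $z_\varphi$ is sufficiently close in energy to some $p\in\mathcal P$, one can construct a nonnegative competitor $w\in H^1(B_1)$ with $w|_{\partial B_1}=\varphi$ and
$$W(1,w)-W(1,p)\leq\bigl(W(1,z_\varphi)-W(1,p)\bigr)\Bigl(1-c_0\bigl(W(1,z_\varphi)-W(1,p)\bigr)^\gamma\Bigr).$$
In two dimensions Weiss produced the stronger version with multiplicative factor $1-c_0$ via contradiction and compactness, which immediately yields polynomial decay. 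This fails for $n\geq 3$ because $\mathcal P$ is a nontrivial manifold and the spectral gap of the second variation of $W$ at $p\in\mathcal P$ degenerates along directions tangent to $\mathcal P$. The workaround I would implement is to expand $\varphi$ in spherical harmonics, split $z_\varphi-p$ into a component tangent to $\mathcal P$ (an infinitesimal reparametrisation of $p$ inside $\mathcal P$) and a component orthogonal to it on which a uniform spectral gap persists, and construct $w$ by simultaneously projecting the tangential part onto $\mathcal P$ and contracting the orthogonal part through an explicit radial interpolation; the factor $\Phi^\gamma$ is the quantitative price paid to avoid compactness on the tangential modes.

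Once the log-epiperimetric inequality is in hand, the rest is mechanical. Applying it at every scale to the trace of $u_r(x):=r^{-2}u(rx)$ on $\partial B_1$, the minimality of $u_r$ against the constructed competitor combined with Weiss's identity
$$\frac{d}{dr}W(r,u)=\frac{2}{r^{n+4}}\int_{\partial B_r}(x\cdot\nabla u-2u)^2\,d\sigma$$
produces the differential inequality
$$\Phi'(r)\geq \frac{c_1}{r}\,\Phi(r)^{1+\gamma}\qquad\text{for }0<r\leq\tfrac12,$$
which integrates to $\Phi(r)\leq C|\log r|^{-1/\gamma}$.

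To conclude, combining the $\Phi$-decay with Monneau's monotonicity formula (Lemma~\ref{lem:Mon}) and the Weiss identity yields the quantitative $L^2$ convergence
$$\int_{\partial B_1}(u_r-p_{*,0})^2\,d\sigma\leq C|\log r|^{-2\epsilon}$$
for some dimensional $\epsilon>0$. Since $u-p_{*,0}$ has Laplacian bounded by $1$ and grows at most quadratically near the origin (Theorem~\ref{thm:C11}), interior $W^{2,p}$ estimates together with Sobolev embedding upgrade this $L^2$ bound to $\|u(x_0+\cdot)-p_{*,x_0}\|_{L^\infty(B_r)}\leq Cr^2|\log r|^{-\epsilon}$, with constants independent of $x_0\in\Sigma$. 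The assertion about $\Sigma_m$ then follows by applying Whitney's extension theorem in the logarithmic Hölder class, exactly as indicated in the remark after Theorem~\ref{thm:C1}.
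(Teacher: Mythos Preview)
The paper does not actually prove Theorem~\ref{thm:CSV}; it is quoted as a result of Colombo--Spolaor--Velichkov~\cite{CSV17}, with only the one-line description that ``the authors introduced a quantitative argument to avoid a compactness step in Weiss' proof.'' Your proposal is a faithful outline of precisely that argument: the log-epiperimetric inequality replaces Weiss's compactness step, yields the differential inequality $\Phi'(r)\geq (c_1/r)\,\Phi(r)^{1+\gamma}$, and its integration gives the logarithmic decay; the passage to the $C^{1,\log^\epsilon}$ manifold via Whitney extension is exactly what the remark following Theorem~\ref{thm:C1} indicates. So there is nothing in the paper to compare against beyond that description, and your sketch is consistent with it.
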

In other words, in dimension $n\geq 3$ one can improve the $C^1$ regularity of Caffarelli to a quantitative one, with a logarithmic modulus of continuity.
 This result raises the question of whether one may hope to improve such an estimate, or if this logarithmic bound is optimal.

 \smallskip

In a recent paper with Serra \cite{FSell} we showed that, at most points, \eqref{eq:C2a u} holds with $\alpha=1$. However, there exist some ``anomalous'' points of
higher codimension where not only  \eqref{eq:C2a u} does not hold with $\alpha=1$, but actually \eqref{eq:C2a u}  is false for any $\alpha>0$.

As a consequence we deduce that, up to a small set, 
singular points can be covered by $C^{1,1}$ (and in some cases $C^2$) manifolds. As we shall discuss in Remark \ref{rmk:optimal} below, this result provides the optimal decay estimate for the contact set.

Finally, it is important to observe that anomalous points may exist and our bound on their Hausdorff dimension is optimal.

Before stating our result we note that, as a consequence of Theorem \ref{thm:uniq blow 2}, points in $\Sigma_0$ are isolated and $u$ is strictly positive in a neighborhood of them. In particular $u$ solves $\Delta u=1$ in a neighborhood of $\Sigma_0$, hence it is analytic there. Thus, it is enough to focus on the cases $m=1,\ldots,n-1$.

Here and in the sequel, ${\rm dim}_{\mathcal H}(E)$ denotes the Hausdorff dimension of a set $E$.
The main result in \cite{FSell} the following:
\begin{theorem}
\label{thm:main}
Let $\Sigma:=\cup_{m=0}^{n-1}\Sigma_m$ denote the set of singular points. Then:
\begin{enumerate}
\item[($n=2$)] $\Sigma_1$ is locally  contained in a $C^{2}$ curve.
\item[($n\ge 3$)] \begin{enumerate}
\item
The higher dimensional stratum $\Sigma_{n-1}$ 
can be written as the disjoint union of ``generic points'' $\Sigma_{n-1}^g$ and ``anomalous points'' $\Sigma_{n-1}^a$, where:\\
- $\Sigma^g_{n-1}$ is  locally  contained in a $C^{1,1}$ $(n-1)$-dimensional manifold;\\
- $\Sigma^a_{n-1}$ is a relatively  open subset of $\Sigma_{n-1}$ satisfying  $${\rm dim}_{\mathcal H}(\Sigma^a_{n-1})\leq n-3$$  (actually, $\Sigma^a_{n-1}$ is discrete when $n=3$).

Furthermore, the whole stratum $\Sigma_{n-1}$ can be  locally  covered by a $C^{1,\alpha_\circ}$  $(n-1)$-dimensional manifold, for some dimensional exponent $\alpha_\circ>0$. 
\item For all $m=1,\ldots,n-2$ we can write $\Sigma_{m}=\Sigma_m^g\cup\Sigma_g^a$,
where:\\
- $\Sigma_m^g$ is locally contained in a $C^{1,1}$
$m$-dimensional manifold;\\
- $\Sigma^a_m$ is a relatively open subset of $\Sigma_{m}$ satisfying $${\rm dim}_{\mathcal H}(\Sigma^a_m)\leq m-1$$ (actually, $\Sigma^a_m$ is discrete when $m=1$).

In addition, the whole stratum $\Sigma_{m}$ can be  locally covered   by a $C^{1,\log^{\epsilon_\circ}}$ $m$-dimensional manifold, for some dimensional exponent $\epsilon_\circ>0$. 
\end{enumerate}
\end{enumerate}
\end{theorem}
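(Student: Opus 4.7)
The plan is to quantify Theorem \ref{thm:uniq blow 2} by performing a \emph{second blow-up} around each singular point and classifying the possible limits. Fix $x_0 \in \Sigma_m$, set $q:=p_{*,x_0}$, and (assuming $u\not\equiv q$ in every neighborhood of $x_0$, otherwise the statement holds trivially at $x_0$) consider the normalized rescalings
\begin{equation*}
w_r(x):= \frac{u(x_0 + rx) - q(rx)}{h(r)},\qquad h(r) := \bigl\| u(x_0+\cdot)-q\bigr\|_{L^2(\partial B_r)},
\end{equation*}
which have unit $L^2$ norm on $\partial B_1$. Using the $C^{1,1}$ bound of Theorem \ref{thm:C11} together with the nondegeneracy (Proposition \ref{prop:non deg}), one extracts (along a subsequence $r_k \to 0$) a nontrivial limit $w$ that is harmonic on $\{q>0\}$, vanishes on $\{q=0\}$, and---by a Weiss-type monotonicity formula for $(u-q)^2$---is \emph{homogeneous of some degree $\lambda \geq 2$}.

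The next step is to classify the admissible $\lambda$. In the top stratum $m=n-1$, the set $\{q=0\}$ is a hyperplane, so by odd reflection $w$ extends to a harmonic polynomial on $\R^n$, whose homogeneity is an integer; thus either $\lambda=2$ (\emph{anomalous}) or $\lambda\geq 3$ (\emph{generic}). For intermediate strata $1\leq m\leq n-2$, a separation-of-variables analysis of harmonic functions on $\R^n$ vanishing on an $m$-plane shows that the admissible homogeneities form a discrete set with a positive gap above $2$, yielding $\lambda=2$ or $\lambda\geq 2+\alpha_\circ$ for a dimensional constant $\alpha_\circ=\alpha_\circ(n,m)>0$. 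This dichotomy provides the decomposition $\Sigma_m=\Sigma_m^g\cup \Sigma_m^a$, where the anomalous set consists of those $x_0$ admitting at least one second blow-up of homogeneity exactly $2$.

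At generic points one promotes the dichotomy into a uniform decay
\begin{equation*}
\|u(x_0+\cdot)-p_{*,x_0}\|_{L^\infty(B_r)}\leq C\,r^{2+\alpha}\qquad \forall\, x_0\in \Sigma_m^g,
\end{equation*}
via a quantitative epiperimetric-type inequality combined with Monneau's lemma: the homogeneity gap is converted into a strictly positive rate for the decay of the excess. In the top stratum one obtains $\alpha=1$; in intermediate strata one gets $\alpha=\alpha_\circ$. Whitney's extension theorem, applied to the map $x_0\mapsto D^2 p_{*,x_0}$ whose continuity is Theorem \ref{thm:uniq blow 2}, then places $\Sigma_m^g$ inside a $C^{1,1}$ manifold, and the full stratum inside a $C^{1,\alpha_\circ}$ (resp.\ $C^{1,\log^{\epsilon_\circ}}$ via Theorem \ref{thm:CSV}) manifold. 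For $n=2$, the one-dimensionality of $\Sigma_1$ lets one integrate the modulus of convergence along the curve to upgrade $C^{1,1}$ to $C^{2}$. At anomalous points, the degree-$2$ blow-up $w(x)=\tfrac12\langle Bx,x\rangle$ satisfies $\operatorname{tr} B=0$ and $B|_{\{q=0\}}\equiv 0$, forcing its support to lie in a proper linear subspace of $\{q>0\}$; a Federer-type dimension-reduction argument based on the translation invariance of $w$ along $\ker B$ then yields ${\rm dim}_{\mathcal H}(\Sigma_m^a)\leq m-1$ (and $\leq n-3$ for $m=n-1$), together with discreteness in the borderline cases.

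The hardest step is the uniform improvement-of-decay at generic points. Monneau's formula (Lemma \ref{lem:Mon}) yields only monotonicity of the excess, while here one needs a positive decay rate with a constant uniform as $x_0$ varies in compact subsets of $\Sigma_m^g$. This is obtained through a compactness-contradiction scheme: a failure of uniform decay would produce, in the limit, a second blow-up of homogeneity exactly $2$, contradicting the definition of $\Sigma_m^g$. Making this scheme quantitative and robust against the possible accumulation of nearby anomalous points---whose second-blow-up homogeneities may cluster at $2$ in intermediate strata---is the technical heart of the proof. The same obstruction explains both why the global regularity drops from $C^{1,1}$ to $C^{1,\log^{\epsilon_\circ}}$ on the anomalous part and why the Hausdorff-dimension bounds on $\Sigma_m^a$ turn out to be sharp.
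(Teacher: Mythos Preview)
The paper does not give a proof of Theorem~\ref{thm:main}; it is quoted from \cite{FSell} and only commented upon in Remark~\ref{rmk:optimal}. Your overall architecture---perform a second blow-up of $u-p_{*,x_0}$, classify the homogeneous limits, feed the resulting decay into Whitney's extension theorem, and bound the anomalous set by Federer-type dimension reduction---is indeed the scheme of \cite{FSell}. However, your classification of second blow-ups is wrong in a way that matters, and the error is visible already from the discussion in Remark~\ref{rmk:optimal}.

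The limit $w$ does \emph{not} vanish on $\{q=0\}$. Since $\Delta(u-q)=-\chi_{\{u=0\}}\le 0$ and $u-q=u\ge 0$ on $\{q=0\}$, the second blow-up inherits a one-sided (thin obstacle/Signorini) condition on $\{q=0\}$, not a Dirichlet one. For the top stratum $m=n-1$ this means $w$ is a homogeneous Signorini solution across a hyperplane: your odd-reflection step is invalid, the admissible homogeneities are not forced to be integers, and---exactly as stated in Remark~\ref{rmk:optimal}(2)(a)---the anomalous set $\Sigma_{n-1}^a$ consists of points whose second blow-up has Signorini frequency $\lambda\in(2,3)$. The bound ${\rm dim}_{\mathcal H}(\Sigma_{n-1}^a)\le n-3$ then comes from dimension reduction for the thin obstacle problem (cf.\ \cite{AC04,ACS08}), not from the linear-algebra argument you propose. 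For the intermediate strata $m\le n-2$, the set $\{q=0\}$ has codimension $\ge 2$ and is removable for the limiting equation, so $w$ is a global homogeneous harmonic polynomial with no vanishing constraint; the homogeneities are integers and ``anomalous'' means exactly $\lambda=2$ with $w$ a harmonic quadratic (Remark~\ref{rmk:optimal}(2)(b)). The integer gap, however, does \emph{not} yield a power-rate decay of $h(r)$: one can have $h(r)/r^2\to 0$ slower than any $r^\alpha$, which is precisely why the whole stratum is only $C^{1,\log^{\epsilon_\circ}}$. Your sketch asserts a positive gap $\lambda\ge 2+\alpha_\circ$ for intermediate strata and then invokes Theorem~\ref{thm:CSV} to get merely logarithmic regularity; these two claims are mutually inconsistent, and the first is false.
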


This result needs several comments.

\begin{remark}\label{rmk:optimal}
We first discuss the optimality of the theorem above, and then make some general considerations.
\begin{enumerate}
\item
Our $C^{1,1}$ regularity provides the optimal control on the contact set in terms of the density decay. Indeed our result implies that, at all singular points up to a $(n-3)$-dimensional set, the following bound holds:
$$
\frac{|\{u=0\}\cap B_r(x_0)|}{|B_r(x_0)|}\leq Cr\qquad \forall\,r>0.
$$
In view of the two dimensional Example 1 in \cite[Section 1]{Sch76}, this estimate is optimal.

\item The possible presence of anomalous points comes from different reasons depending on the dimension of the stratum. More precisely, the following holds:
\begin{enumerate}
\item The possible presence of points in $\Sigma_{n-1}^a$ comes from the potential existence, in dimension $n\ge 3,$ of $\lambda$-homogeneous solutions to the so-called Signorini problem with $\lambda\in(2,3)$, see for instance \cite{AC04,ACS08}. Whether this set is empty or not is an important open problem.
 \item The anomalous points in the strata $\Sigma^a_m$ for $m\le n-2$ come from the possibility that, around a singular point $x_0$, the function $u$ behaves as
$$
u(x_0+rx)=r^2\, p_{*,x_0}(x)
+r^2 \varepsilon_r \,q(x)+o(r^2\varepsilon_r),
$$ 
where:\\
 - $\varepsilon_r\in \R^+$ is infinitesimal as $r\to 0^+$, but $\varepsilon_r\gg r^\alpha$ for any $\alpha>0$;\\
- $q$ is a nontrivial second order harmonic polynomial.\\
This behavior may look rather strange: indeed we are saying that, after one removes  from $u$ its second order Taylor expansion $p_{*,x_0}$, one still sees as a reminder a second order polynomial.
However, it turns out that such anomalous points may exist, and  we can construct examples of solutions for which ${\rm dim}_{\mathcal H}(\Sigma_m^a)=m-1$. 
\end{enumerate}
\item Our result on the higher dimensional stratum $\Sigma_{n-1}$ extends Theorem \ref{thm:Weiss} to every dimension, and improves it in terms of the regularity. 
\item
The last part of the statement in the case ($n\geq 3$)-(b) corresponds to Theorem \ref{thm:CSV}. In \cite{FSell} we obtain the same result as a simple byproduct of our analysis. In addition, our result on the existence of anomalous points shows that Theorem \ref{thm:CSV} is essentially optimal.
 \end{enumerate}
 \end{remark}

\section{Generalizations and applications}

\subsection{The parabolic obstacle problem}
The first natural extension of Theorem \ref{thm:main} consists in understanding the structure of the free boundary in the parabolic case
 \begin{equation}
 \label{eq:parab obst2}
\partial_tu=\Delta u-\chi_{\{u>0\}},\qquad u \geq 0,\qquad \partial_tu \geq 0.
\end{equation}
As shown in \cite{C77} (see also \cite{CF79}), solutions to this problem are $C^1$ in time and $C^{1,1}$ in space, namely
$$
|\partial_t u|+|u|+|\nabla u|+|D^2u| \in L^\infty_{\rm loc},\qquad \partial_t u \in C^0.
$$
Also, as in the elliptic case, points of the {free boundary} $\partial\{u>0\}$ are divided into two classes: regular points and singular points. A free boundary point $z_0=(t_0,x_0)$ is either regular or singular depending on the type of blow-up of $u$ at that point. More precisely:
\begin{multline}\label{regular}
z_0 \mbox{ is called \emph{regular} point}  \quad \Leftrightarrow \\  \quad \frac{u(t_0+r^2t,x_0+ rx)}{r^2} \ \stackrel{r\downarrow 0}\longrightarrow\  \frac12 [(\mathbf{e}\cdot x)_+]^2
\end{multline}
for some $\boldsymbol e=\boldsymbol e_{z_0}\in \mathbb S^{n-1}$, and
\begin{multline}\label{singular}
z_0 \mbox{ is called \emph{singular} point}  \quad \Leftrightarrow\\
 \quad  \frac{u(t_0+r^2t,x_0+ rx)}{r^2} \ \stackrel{r\downarrow 0}\longrightarrow \ p_{*,z_0}(x) :=\frac 1 2 \langle  Ax,x\rangle
\end{multline}
for some symmetric nonnegative definite matrix $A=A_{z_0}\in \R^{n\times n}$ with  ${\rm tr}(A)=1$. 
The existence of the previous limits in \eqref{regular} and \eqref{singular},
as well as the classification of possible blow-ups are well-known results; see \cite{C77,CPS00,B06}.
It is interesting to observe that both at regular and singular points the blow-ups are independent of time. This can be explained as follows:
since solutions are $C^1$ in time and $C^{1,1}$ in space and $\partial_t u=\nabla u=0$ on the free boundary, near a free boundary point $z_0=(t_0,x_0)$ the function $u$ satisfies
$$u(t_0+t,x_0+x)=o(t)+O(|x|^2).$$ Because of this, it follows immediately that the blow-ups considered above will not depend on $t$.

By the theory in \cite{C77,CPS00}, the free boundary is an analytic hypersurface near regular points. On the other hand, near singular points the {contact set} $\{u=0\}$ may form cusps and can be rather complicated.

To understand the structure of the singular points, we consider again a stratification based on the size of the zero set of the blow-up.
More precisely, given a singular point $z_0=(t_0,x_0)$, we set
\[L_{z_0} := \{ p_{*,z_0} =0 \} = {\rm ker}(A_{z_0}).\]
Then, given a time $t>0$ and $m\in \{0,1,2,\dots, n-1\}$, we define the $m$-th stratum at time $t$ as
\[
\Sigma_{m,t} := \big\{z_0=(t_0,x_0) \ : \mbox{ singular point with } \dim( L_{z_0}) =m \text{ and }t_0=t\big\}.
\]
The natural generalization of Theorem \ref{thm:C1} would be to prove that, for any $t$ and $m$, the set $\Sigma_{m,t}$ is contained in a $m$-dimensional manifold of class $C^1$.
Actually, as proved in \cite{LM15} (see also \cite{BDM06,B06} for some previous contributions)
$$
\bigcup_{t>0} \bigl(\{t\}\times \Sigma_{m,t}\bigr)\subset \R^+\times \R^n
$$
is locally contained in a $m$-dimensional manifold of class $C^1$,
where here $C^1$ regularity has to be intended with respect to the parabolic metric
$$
d_{P}(z,z'):=|x-x'|+|t-t'|^{1/2},\qquad \text{where }z=(x,t),\,z'=(x',t').
$$
In our paper \cite{FRS} we take a different approach. More precisely,
 because the function $u$ is mononically increasing in time, the free boundaries $\partial\{u(t)>0\}$ are nested.
Exploiting this monotonicity one could use the arguments in \cite{LM15} to show that
$$
\bigcup_{t>0}\Sigma_{m,t}\subset \R^n
$$
is locally contained in a $m$-dimensional manifold of class $C^1$.
Hence, it is natural to try to prove the analogue of Theorem \ref{thm:main} for $\cup_{t>0}\Sigma_{m,t}$.
This is done in \cite{FRS}, where we obtain the following result (we state here a simplified version):
\begin{theorem}
\label{thm:main2}
Let $u$ be a solution of \eqref{eq:parab obst2}, let $m \in \{1,\ldots,n-1\}$, and let $\Sigma_m:=\cup_{t>0}\Sigma_{m,t}$. Then $\Sigma_{m}=\Sigma_m^g\cup\Sigma_m^a$,
where:\\
- $\Sigma_m^g$ can be locally covered by a $C^{1,1}$
$m$-dimensional manifold;\\
- $\Sigma^a_m$ is a relatively open subset of $\Sigma_{m}$ satisfying ${\rm dim}_{\mathcal H}(\Sigma^a_m)\leq m-1$.
\end{theorem}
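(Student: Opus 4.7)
The plan is to reduce the parabolic problem to an essentially spatial one by exploiting the time-monotonicity of $u$, and then to adapt to this setting the refined blow-up scheme of \cite{FSell}.

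First I would use that $\partial_t u \geq 0$, which forces the sets $\{u(t)>0\}$ to be nondecreasing in $t$. In particular each spatial point $x_0 \in \R^n$ appears on the free boundary at most once, at a unique time $t(x_0)$, so the map $x_0 \mapsto (t(x_0), x_0)$ identifies $\bigcup_{t>0} \Sigma_{m,t}$ with a subset of $\R^n$. Since the blow-ups in \eqref{singular} are independent of $t$ (as stressed in the text, because $u$ is $C^1$ in time and $\partial_t u = 0$ on the free boundary), this identification respects the singular stratification, and by \cite{LM15} the resulting set is already known to be $C^1$; our task is to sharpen the modulus. Next I would establish parabolic analogues of the Weiss and Monneau monotonicity formulas (with a backward Gaussian heat kernel replacing integration on $\partial B_r$). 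Together with the time-independence of blow-ups, these formulas upgrade \eqref{singular} to the quantitative estimate
\[
\bigl\|u(t(x_0)+r^2\,\cdot,\,x_0+r\,\cdot) - r^2\,p_{*,z_0}\bigr\|_{L^\infty} \leq r^2\,\sigma(r),
\]
with $\sigma(r)\to 0$ uniformly on compact subsets of $\Sigma_m$; this is the parabolic analogue of Theorem \ref{thm:uniq blow 2}.

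The heart of the proof is the analysis of the second-order correction, following \cite{FSell}. Writing
\[
u(t(x_0)+r^2 t,\, x_0+ry) = r^2\,p_{*,z_0}(y) + r^2\,\varepsilon_r\,w_r(t,y),
\]
with $\varepsilon_r$ the $L^2$-size of the remainder on the unit parabolic cylinder, a compactness argument combined with the monotonicity formulas identifies any accumulation point $q = \lim w_{r_k}$ as a time-independent caloric (hence harmonic) polynomial of degree two vanishing on $\{p_{*,z_0}=0\}$. Declare $z_0 \in \Sigma_m^g$ if $q\equiv 0$ is the only possibility, and $z_0 \in \Sigma_m^a$ otherwise. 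At generic points, a Dini-type iteration improves $\varepsilon_r$ to $O(r)$, yielding a uniform $C^{1,1}$ expansion of $u$ with prescribed $2$-jet $(0,0,D^2 p_{*,z_0})$; Whitney's extension theorem combined with the Implicit Function Theorem — exactly as in the proof of Theorem \ref{thm:C1} — then produces the $C^{1,1}$ $m$-dimensional covering manifold for $\Sigma_m^g$.

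For the anomalous set, the estimate ${\rm dim}_{\mathcal H}(\Sigma_m^a) \leq m-1$ is obtained by a Federer-type dimension reduction: one subtracts a suitable multiple of the correction polynomial $q$ from $u - p_{*,z_0}$ and applies a \emph{second} Monneau-type monotonicity formula to the residual, showing that anomalous points compatible with a given $q$ lie in a $C^1$ set of dimension at most $m-1$. The main obstacle I anticipate is the construction of this second-order parabolic monotonicity formula: the Gaussian weight and the terms involving $\partial_t u$ generate error contributions that must be absorbed using the sign $\partial_t u \geq 0$ together with the nesting of the free boundaries in time — this is precisely where the space-monotonicity observation from the first step pays off. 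Once this tool is in hand, the remaining arguments of \cite{FSell} — Dini iteration for generic points, dimension reduction for anomalous ones — transfer to the parabolic setting with only notational modifications.
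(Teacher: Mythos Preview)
The paper does not actually prove this theorem here; it is quoted from \cite{FRS} (listed as work in progress) and accompanied only by a short paragraph describing the main difficulties. Your outline is broadly consistent with that description --- adapt the second-blow-up scheme of \cite{FSell} to the parabolic problem, using the time-monotonicity $\partial_t u\ge 0$ and the fact that blow-ups at free boundary points are time-independent --- so at the level of strategy you are aligned with what the paper indicates.

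There is, however, a real gap between your sketch and what the paper identifies as the heart of the matter. The paper stresses that the extension ``is far from trivial, as it requires relating the behavior of the solution at different times for different singular points'' and that this ``involves both finer analytic arguments and a series of new covering-type theorems that allow us to take care of sets coming from different times.'' Your reduction --- assigning to each $x_0$ a single time $t(x_0)$ and then treating $\Sigma_m$ as a purely spatial object on which the \cite{FSell} machinery runs ``with only notational modifications'' --- is precisely the step that does \emph{not} go through cleanly. First, $t(x_0)$ need not be single-valued (monotonicity does not forbid the free boundary from being stationary at a point for a time interval), and even where it is, the map $x_0\mapsto t(x_0)$ need not be continuous; this already complicates the Whitney-extension step. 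More substantially, the Dini iteration and the comparison of second-order expansions at two nearby singular points $x_0,x_1$ force you to compare $u(t(x_0),\cdot)$ with $u(t(x_1),\cdot)$, i.e.\ solutions at \emph{different} times, and this is exactly what requires the ``new covering-type theorems'' the paper singles out. You do not mention any such device; the obstacle you anticipate (building a second-order parabolic Monneau formula with Gaussian weight) is genuine but is not the principal one the paper flags.
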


The extension from the elliptic to the parabolic problem is far from trivial, as it requires relating the behavior of the solution at different times for different singular points. This involves both finer analytic arguments and a series of new covering-type theorems that allow us to take care of sets coming from different times.
This result, besides considerably improving the previous knowledge on the structure of the free boundary for the Stefan problem, can be used to estimate the size of the set of singular times. 

Let $\Sigma_t:=\cup_{m=0}^{n-1}\Sigma_{m,t}$, and define the set of 
 {\it singular times}
$$
\mathcal S:=\{t>0\,:\,\Sigma_{t}\neq \emptyset\}.
$$
Because singular points are a closed subset of the free boundary,  if we ensure that the free boundary is contained in a bounded domain then the set $\Sigma$ is compact, in which case $\mathcal S$ is a compact subset of $\R^+$. In particular, if $t_0 \not \in \mathcal S$ then there exists $\tau_0>0$ such that
$$\partial\{u>0\}\cap \bigl((t_0-\tau_0,t_0+\tau_0)\times \R^n\bigr) \quad \text{is an analytic hypersurface}.
$$
A fundamental question is to estimate the size of $\mathcal S$.

Recalling that $\partial \{u>0\}$ coincides with the free boundary for the Stefan problem (recall Section \ref{sec:stefan obst}),
in \cite{FRS} we prove the following result:
\begin{theorem}
\label{thm:stefan}
Let $\Omega\subset \R^3$ be a bounded domain, and let $\theta$ solve the Stefan problem in $\mathbb R^+\times \Omega$, with $\theta(t)>0$ on $\partial \Omega$. Then
$$
\partial\{\theta>0\}\cap \bigl((\mathbb R^+\setminus \mathcal S)\times  \Omega\bigr)\quad \text{is analytic,}
$$ 
where $\mathcal S\subset \mathbb R^+$ is a compact set satisfying
$$
{\rm dim}_{\mathcal H}(\mathcal S)\leq \frac12.
$$
\end{theorem}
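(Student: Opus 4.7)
I would first apply Duvaut's transformation from Section~\ref{sec:stefan obst} to identify the Stefan free boundary with $\partial\{u>0\}$, where $u$ solves the parabolic obstacle problem~\eqref{eq:parab obst2}. At any regular free boundary point (in the sense of~\eqref{regular}), Caffarelli's theorem plus standard parabolic bootstrap yields analyticity in a spacetime neighborhood; hence on $(\mathbb R^+\setminus \mathcal S)\times \Omega$ the free boundary is analytic. The hypothesis $\theta>0$ on $\partial\Omega$ confines the free boundary to a compact subset of $\Omega$, and since singular points are closed in the free boundary, $\mathcal S$ is compact in $\mathbb R^+$.

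The substantive part is the estimate $\dim_{\mathcal H}(\mathcal S)\le 1/2$. By Theorem~\ref{thm:main2} with $n=3$, for each $m\in\{0,1,2\}$ the spatial set $\Sigma_m=\bigcup_{t>0}\Sigma_{m,t}\subset\R^3$ decomposes as $\Sigma_m^g\cup\Sigma_m^a$ with $\Sigma_m^g$ covered by a $C^{1,1}$ $m$-dimensional manifold and $\dim_{\mathcal H}(\Sigma_m^a)\le m-1$. Because $\partial_tu\ge 0$ and $u=|\nabla u|=0$ on the free boundary, each spatial singular point lies on $\Sigma_{m,t}$ for exactly one value $t=\tau(x)$, and $\mathcal S=\tau(\Sigma)$ is the image of this well-defined time function. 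Since $\Sigma_0$, $\Sigma_1^a$, $\Sigma_2^a$ are at most $0$-dimensional, their images under $\tau$ are countable and do not contribute to the Hausdorff dimension.

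It remains to bound $\tau(\Sigma_m^g)$ for $m=1,2$. The central quantitative input is a modulus of continuity for $\tau$ on each local chart of the covering $m$-manifold, of the form $|\tau(x)-\tau(y)|\le \omega(|x-y|)$. Such a modulus should follow by combining the parabolic analogue of Theorem~\ref{thm:uniq blow 2} (locally uniform convergence of blow-ups along $\Sigma_m^g$), the $C^{1,1}$ spatial regularity of $u$ together with $u=|\nabla u|=0$ at both $x$ and $y$, the vanishing of $\partial_t u$ at singular points, and the $t$-independence of the blow-ups. A Hölder-image argument then yields the desired bound for the $1$-dimensional stratum, producing $\dim_{\mathcal H}(\tau(\Sigma_1^g))\le 1/2$.

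The main obstacle is the top stratum $\Sigma_2^g$: the fact that its covering manifold is $2$-dimensional and lives in $\R^3$ does not, by the modulus of continuity alone, constrain $\tau(\Sigma_2^g)$ below dimension $1$. The extra factor of $1/2$ must come from the monotonicity of the contact set: because $\{u(t)=0\}$ is shrinking in $t$, distinct singular times of the top stratum correspond to essentially disjoint open portions of the covering manifold $M^2\subset\R^3$. Converting this into a quantitative estimate requires a delicate Vitali-type covering of $\Sigma_2^g$ by parabolic cylinders, adapted to the blow-up scales at each singular point, combined with a pigeonhole on the dyadic time scales to promote the one extra spatial codimension into an extra factor of $1/2$ on the time projection. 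Executing this step is the main technical hurdle; once it is in place, collecting the contributions from the different strata yields $\dim_{\mathcal H}(\mathcal S)\le 1/2$, completing the proof.
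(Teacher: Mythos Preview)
The paper does not actually contain a proof of Theorem~\ref{thm:stefan}; it is stated as a result of \cite{FRS}, with the only hint being that Theorem~\ref{thm:main2} ``can be used to estimate the size of the set of singular times.'' Your overall architecture (Duvaut transform, reduction to the parabolic obstacle problem, compactness of $\mathcal S$, and invoking Theorem~\ref{thm:main2} to stratify $\Sigma$) is therefore consistent with what the paper indicates, and you correctly isolate the top stratum as the heart of the matter.

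That said, there are concrete gaps in your outline. First, your claim that $\Sigma_2^a$ is ``at most $0$-dimensional'' is not supported by Theorem~\ref{thm:main2} as stated: for $m=2$ it only gives $\dim_{\mathcal H}(\Sigma_2^a)\le m-1=1$. (The sharper bound $\le n-3$ for the top anomalous stratum appears in the elliptic Theorem~\ref{thm:main}, but the parabolic statement you are citing is explicitly a ``simplified version'' and does not record it.) So $\Sigma_2^a$ cannot be disposed of as you suggest; it must be handled on the same footing as $\Sigma_1^g$. Second, ``at most $0$-dimensional'' does not mean countable, so even for $\Sigma_0$ and $\Sigma_1^a$ you still need the Lipschitz/H\"older behavior of $\tau$ to conclude that their images in time have dimension $0$. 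Third, and most seriously, your proposed mechanism for $\Sigma_2^g$---that monotonicity forces different singular times to occupy ``essentially disjoint open portions'' of the $2$-manifold---does not work as written: there is no reason for $\Sigma_{2,t}^g$ to have positive $\mathcal H^2$-measure in the covering manifold at a given singular time, so the disjointness gives no cardinality or dimension bound on the set of times. A naive parabolic-scaling bound $|\tau(x)-\tau(y)|\lesssim |x-y|^2$ only yields $\dim_{\mathcal H}\tau(\Sigma_2^g)\le 1$, which is not enough. Getting down to $1/2$ genuinely requires the finer information encoded in the $C^{1,1}$ expansion at generic points (an extra order of decay beyond $r^2$) together with bespoke covering arguments in space--time; this is precisely the content deferred to \cite{FRS}, and your sketch does not yet capture it.
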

At least to our knowledge, this is the first result on the size of the singular times.

\subsection{Generic obstacle problems}
A famous conjecture by Schaeffer on the elliptic obstacle problem states that, generically, the set of singular points in the free boundary should be empty \cite{Sch76}. This result has been proved in dimension 2 by Monneau \cite{M03}.

To attack this problem, given a domain $\Omega\subset \mathbb R^n$, let $t\mapsto f_t \in C^0(\partial\Omega)$ be a one parameter family of nonnegative functions such that
$\partial_t f_t >0$ inside the set $\{f_t>0\}$.
For instance one may consider $f_t(x):=f(x)+t$ as in \cite{M03}, but many other choices are possible.

Then, for any $t$ we can consider $u_t$ the solution of the obstacle problem
$$
\left\{
\begin{array}{ll}\Delta u_t=\chi_{\{u_t>0\}}&\text{in }\Omega,\\
u_t \geq 0&\text{in }\Omega,\\
u_t=f_t &\text{on }\partial\Omega.
\end{array}
\right.
$$
It is interesting to observe that this one-parameter family of elliptic obstacle problems can be used to investigate the regularity of the free boundary both in the study of injection of fluid into a finite
Hele-Shaw cell \cite{Ell81}
and in the two dimensional annular electrochemical maching problem \cite{Ell80}.

Note that, since the boundary data are increasing, 
$$
u_t\geq u_s\qquad \text{for $t \geq s$.}
$$
For each $t$ we define
$\Sigma_{m,t}$ as the $m$-th stratum of the singular points for $u_t$.
Of course Theorem \ref{thm:main} applies to each $u_t$. However, exploiting the monotonicity with respect to $t$, as in the parabolic case we can prove that the very same theorem holds with $\Sigma_{m,t}$ replaced by $\cup_{t>0}\Sigma_{m,t}$ (actually, we can prove even a finer version of that result).

To obtain this, several new difficulties arise with respect to the parabolic case. Indeed, while on the one hand the fact that each $u_t$ solves an elliptic problem simplifies the analysis, on the other hand much more work is needed to relate the behavior of different solutions $u_t$ and $u_s$ are different singular points.

As an application, 
define $\Sigma_t:=\cup_{m=0}^{n-1}\Sigma_{m,t}$.
Then we can prove the following result:
\begin{theorem}
\label{thm:Schaeffer}
Let $u_t$ be as before. Then, for a.e. $t$, the singular set $\Sigma_t$ satisfies
$$
{\rm dim}_{\mathcal H}(\Sigma_t)\leq n-4.
$$
In particular, for $n \leq 3$ we have $\Sigma_t=\emptyset$ for a.e. $t$. 
\end{theorem}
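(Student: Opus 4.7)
The plan is to combine two ingredients: the elliptic family analogue of Theorem~\ref{thm:main} alluded to in the text, which decomposes $\bigcup_{t>0}\Sigma_{m,t} = \Sigma_m^g \cup \Sigma_m^a$ with $\Sigma_m^g$ locally contained in a $C^{1,1}$ $m$-dimensional manifold and $\dim_{\mathcal H}\Sigma_{n-1}^a \leq n-3$, $\dim_{\mathcal H}\Sigma_m^a \leq m-1$ for $m\le n-2$; and a Fubini-type slicing in the parameter $t$ that exploits the monotonicity of the family. The target exponent $n-4$ is exactly what one gets by time-slicing the top anomalous stratum (of spatial dimension $n-3$) by one parameter, provided the generic stratum is empty for a.e. $t$.

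I would first install a strict monotonicity estimate. From $\partial_t f_t>0$ on $\{f_t>0\}$, the difference $v := u_t-u_s$ ($t>s$) satisfies $\Delta v = \chi_{\{u_t>0\}}-\chi_{\{u_s>0\}}\leq 0$ on $\{u_s>0\}$ (since the contact sets shrink) with $v>0$ on $\partial\Omega$; the maximum principle gives $v>0$ strictly in $\{u_s>0\}$, and a Hopf-type quantitative lower bound holds away from $\partial\{u_s>0\}$.

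The heart of the proof is \emph{generic disjointness}: each $x_0 \in \Sigma_m^g$ is a singular point of $u_t$ for at most one $t$. Suppose $x_0\in\Sigma_{m,s}\cap\Sigma_{m,t}$ with $s<t$. Applying Theorem~\ref{thm:uniq blow 2} to each of $u_s, u_t$, the blow-up polynomials $p_{*,x_0,s}, p_{*,x_0,t} \in\mathcal P$ both have trace one; the monotonicity $u_t\ge u_s$ together with the quadratic expansions forces $p_{*,x_0,t}\ge p_{*,x_0,s}$ pointwise, so their difference is a nonnegative quadratic form with zero trace, hence identically zero. Thus $p_{*,x_0,s}=p_{*,x_0,t}$. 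The quantitative $C^{1,1}$ expansion~\eqref{eq:C2a u} from the $C^{1,1}$ manifold containment, applied to both $u_s$ and $u_t$, then yields $v=O(r^{2+\alpha})$ near $x_0$, contradicting the Hopf-type lower bound. This produces a well-defined map $\tau:\Sigma_m^g\to\R^+$, and the same stability argument makes $\tau$ continuous (in fact locally Lipschitz).

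The proof concludes by slicing. For the generic stratum, Sard's theorem applied to $\tau$ on the $C^{1,1}$ $m$-dimensional manifold containing $\Sigma_m^g$, combined with the disjointness, gives $\Sigma_{m,t}^g=\emptyset$ for a.e. $t$. For the anomalous parts, a Fubini-type slicing (working with the graph $\{(x,t) : x \in \Sigma_{m,t}^a\}\subset\R^n\times\R^+$, or with a measurable selection of $\tau$) yields $\dim_{\mathcal H}(\Sigma_{m,t}^a)\leq \dim_{\mathcal H}(\Sigma_m^a)-1$ for a.e. $t$. Combining: for a.e. $t$, $\dim_{\mathcal H}\Sigma_{n-1,t}\leq n-4$ and $\dim_{\mathcal H}\Sigma_{m,t}\leq m-2\leq n-4$ for $m\leq n-2$, giving $\dim_{\mathcal H}\Sigma_t\leq n-4$; the case $n\leq 3$ follows since Hausdorff dimension is nonnegative. \textbf{The main obstacle} is the disjointness step: while the polynomial comparison is clean, making it rigorous requires the full strength of the $C^{1,1}$ decay estimate from Theorem~\ref{thm:main} together with a sharp quantitative Hopf lemma linking different members of the family at a common singular point. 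This is the elliptic analogue of the central technical difficulty of \cite{FRS} in the parabolic setting, where comparing solutions at different times at the same spatial point is likewise the delicate step.
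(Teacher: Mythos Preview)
The paper does not actually prove Theorem~\ref{thm:Schaeffer}; it is announced as a consequence of the ``finer version'' of the structure theorem for $\cup_{t}\Sigma_{m,t}$, with the argument deferred to \cite{FRS}. Your overall architecture (structure theorem for the union, then slice in $t$) matches what the paper sketches, and the observation that two blow-up polynomials in $\mathcal P$ ordered pointwise must coincide is correct and relevant.

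The substantive gap is in your handling of the generic strata. From disjointness you obtain a well-defined map $\tau:\Sigma_m^g\to\R^+$, but Sard's theorem does \emph{not} yield $\Sigma_{m,t}^g=\tau^{-1}(t)=\emptyset$ for a.e.\ $t$: Sard controls only the measure of the \emph{critical} values, and you give no reason why every point of $\Sigma_m^g$ should be critical for $\tau$ (think of $\tau(x)=x$ on a segment). What disjointness actually buys is that the space--time set $\{(t,x):x\in\Sigma_{m,t}^g\}$ is a Lipschitz graph over $\Sigma_m^g$, hence has Hausdorff dimension at most $m$; a coarea/Eilenberg slicing then gives $\dim_{\mathcal H}\Sigma_{m,t}^g\le m-1$ for a.e.\ $t$. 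For the top stratum $m=n-1$ this is $n-2$, two full units short of $n-4$, so your dimension count does not close. A secondary issue: your Fubini step for $\Sigma_m^a$ tacitly uses disjointness there as well, but your disjointness argument relied on the $C^{1,1}$ expansion, which by definition fails on $\Sigma_m^a$.

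The missing two orders of reduction are exactly why the paper stresses that a \emph{finer} version of Theorem~\ref{thm:main} is required. In \cite{FRS} one goes beyond the generic/anomalous dichotomy: at generic singular points one subtracts $p_{*,x_0}$ and performs a second blow-up, which lands on solutions of the Signorini (thin obstacle) problem (cf.\ Remark~\ref{rmk:optimal}(2)(a)); a further stratification by the homogeneity of this next-order term, combined with the monotonicity in $t$, is what produces the additional dimension drop. Your sketch captures the first layer of the argument but not the mechanism that yields the exponent $n-4$.
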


Notice that, by the discussion above, this result implies the validity of Schaeffer's conjecture in dimension $n\leq 3$.

Actually, as in the case 
of Theorem \ref{thm:stefan},
for $n=2,3$ we can give an estimate on the Hausdorff dimension of the set of  singular times: if we define 
$$
\mathcal S:=\{t>0\,:\,\Sigma_t\neq\emptyset\},
$$
then
$$
\text{${\rm dim}_{\mathcal H}(\mathcal S)\leq 1/4\,$ for $n=2$},\qquad 
\text{${\rm dim}_{\mathcal H}(\mathcal S)\leq 1/2\,$ for $n=3$.}
$$
Recalling the connection for $n=2$ to the Hele-Shaw flow \cite{Ell81} and the electrochemical maching problem \cite{Ell80} discussed above, we deduce that in these problems the free boundary is smooth outside a closed set of singular times of dimension at most 1/4.

\end{document}